\theoremstyle{definition}
\newtheorem{theorem}{Theorem}[section]
\newtheorem{proposition}[theorem]{Proposition}
\newtheorem{lemma}[theorem]{Lemma}
\newtheorem{corollary}[theorem]{Corollary}
\newtheorem{problem}[theorem]{Problem}
\newtheorem{definition}[theorem]{Definition}
\theoremstyle{remark}
\newtheorem{remark}[theorem]{Remark}
\numberwithin{equation}{section}
\newcommand{\R}{\mathbb{R}}
\newcommand{\N}{\mathbb{N}}
\newcommand{\C}{\mathbb{C}}
\newcommand{\Pol}{\mathrm{Pol}}
\newcommand{\tr}{\mathrm{tr}}
\newcommand{\Span}{\mathrm{Span}}
\newcommand{\Stab}{\mathrm{Stab}}
\newcommand{\supp}{\mathrm{supp}}
\newcommand{\Sym}{\mathrm{Sym}}
\newcommand{\TFF}{\mathrm{TFF}}
\newcommand{\EITFF}{\mathrm{EITFF}}
\newcommand{\ECTFF}{\mathrm{ECTFF}}
\title{Constructing Spherical Designs Using Tight $t$-Fusion Frames}
\author[R.~Misawa]{Ryutaro Misawa}
\address[R.~Misawa]{Graduate School of Information Sciences\\
 Tohoku University\\
6-3-09 Aramaki-Aza-Aoba, Aoba-ku, Sendai 980-8579\\
	 Japan}
\email{misawa.ryutaro.q2@dc.tohoku.ac.jp}
\begin{document}

\begin{abstract}
In this paper, we study conditions under which a finite subset \(Z\) of the unit sphere \(S^{d-1}\subset \R^{d}\) becomes a spherical \(t\)-design, when \(Z\) is constructed by the following procedure: starting from a finite set of \(k\)-dimensional subspaces in the real Grassmannian \(G_{k,d}\), we place, for each such \(k\)-dimensional subspace, a finite set on its unit sphere, and then take the union of these sets in \(S^{d-1}\).
For this construction problem—namely, obtaining spherical designs in higher dimensions by distributing point sets on lower-dimensional spheres subspace by subspace—we provide a sufficient condition based on the framework of tight \(t\)-fusion frames (\(\TFF_t\)) due to Bachoc--Ehler.
As a preparation for applications, we moreover give an explicit construction of equal-weight tight \(2\)-fusion frames on \(G_{2,d}\) for infinitely many dimensions \(d\), via unions of orbits of the hyperoctahedral group.
We also derive necessary conditions for the existence of highly symmetric tight \(t\)-fusion frames, namely equi-chordal and equi-isoclinic tight \(t\)-fusion frames (\(\ECTFF_t\) and \(\EITFF_t\)), on \(G_{2,d}\), and in particular obtain bounds on the number of points.
\end{abstract}

\maketitle

\section{Introduction}

Spherical $t$-designs arise in approximation theory, frame theory, and algebraic combinatorics, among other areas
(\cite{ACSW2010,Womersley2017, BF2003,NPW2006,Renes2004,DGS1977,BB2009}).
They are defined as finite subsets of the $(d-1)$-dimensional unit sphere
\[
S^{d-1}:=\{x\in\R^d\mid \|x\|=1\}
\]
that reproduce spherical averages of all polynomials of degree at most $t$ by finite sums.
This concept was systematically developed by Delsarte--Goethals--Seidel \cite{DGS1977}.
Moreover, Bondarenko--Radchenko--Viazovska \cite{BRV2013} proved that, for each fixed dimension $d$ and every $t\in\N$,
there exists a spherical $t$-design on $S^{d-1}$ with $N=O(t^{d-1})$ points.

Since then, explicit constructions of spherical $t$-designs have been intensively studied
(\cite{Bajnok1991-1, Xiang2022, TTHS2025}).
In this paper, we focus on the following type of construction problem:
\emph{placing designs on lower-dimensional spheres inside each of a family of subspaces, in order to obtain a design on a higher-dimensional sphere}.
Throughout the paper, we write
\[
G_{k,d}:=\{V\le \R^d\mid \dim V=k\}
\]
for the real Grassmann manifold.

\begin{problem}\label{prob:gene}
Fix $t\in\mathbb N$.
For any $(d,k)$ ($d\in\mathbb N$, $1\le k\le d$),
give conditions on a finite set $D\subset G_{k,d}$ and point sets $Y_V\subset S(V)\simeq S^{k-1}$ for each $V\in D$
such that
\[
Z:=\bigcup_{V\in D} Y_V \subset S^{d-1}
\]
is a spherical $t$-design.
In particular, clarify to what extent this property can be controlled by the choice of $D$.
\end{problem}

Constructions of this kind trace back to those of K\"onig \cite{Koenig1999} and Kuperberg \cite{Kuperberg2006}.
Cohn--Conway--Elkies--Kumar \cite{CCEK2007} suggested the potential of constructions along Hopf fibrations.
As a concrete example, Sloane--Hardin--Cara \cite{SHC2003} searched for spherical designs on $S^{3}$
that can be constructed in the case $(d,k)=(4,2)$.
Subsequently, Okuda \cite{Okuda2015} established a theorem for $(d,k)=(4,2)$,
constructing designs on $S^{3}$ via the Hopf map $S^{3}\to S^{2}$.
Furthermore, Lindblad \cite{Lindblad2023} generalized Okuda's framework and treated in a unified manner
the families corresponding to $(d,k)=(2d',2),\ (4d'',4),\ (16,8)$, where $d',d''>1$ are integers.

In this paper, using the framework of tight $t$-fusion frames ($\TFF_t$) introduced by Bachoc--Ehler \cite{BE2013},
we give a general sufficient condition for Problem~\ref{prob:gene}, as well as explicit constructions in a certain class.

In frame theory, a fusion frame is a generalization of an ordinary frame,
obtained by replacing vectors with subspaces.
Concretely, it consists of a family of closed subspaces $\{W_i\}_{i\in I}$ of a Hilbert space $H$
together with weights $w_i>0$.
One calls $(\{W_i\},\{w_i\})$ a fusion frame if the operator $\sum_{i\in I} w_i^{2} P_{W_i}$
(where $P_{W_i}$ denotes the orthogonal projection onto $W_i$)
is bounded, positive, and invertible on $H$ (see \cite{CKL2008}).
Tight $t$-fusion frames ($\TFF_t$) were introduced by Bachoc--Ehler \cite{BE2013} as a higher-order analogue of fusion frames,
and are closely related to designs on Grassmann manifolds.
While the existence of $\TFF_t$ is known in general, explicit and systematic construction methods remain far from complete.
Motivated by this, we pose the following construction problem.

\begin{problem}\label{prob:tff_t}
Let $t$ be a positive integer.
For arbitrary $d,k$, explicitly construct a $\TFF_t$ on $G_{k,d}$.
\end{problem}

Finally, inspired by the strong symmetry exhibited by equi-angular line sets on $G_{1,d}$,
we consider higher-dimensional analogues on $G_{k,d}$:
equi-chordal tight $t$-fusion frames ($\ECTFF_t$) and equi-isoclinic tight $t$-fusion frames ($\EITFF_t$).
These are highly symmetric subclasses of $\TFF_t$ in which the inter-subspace distances are uniform,
and they are closely connected to optimal packings and highly symmetric configurations in Grassmannian spaces.
In this paper, we derive necessary parameter conditions 
for the existence of $\ECTFF_2$ and $\EITFF_2$.

\subsection*{Contributions}
\begin{enumerate}
  \item For Problem~\ref{prob:gene}, we provide a general sufficient condition using $\TFF_t$ on $G_{k,d}$.
  \item For Problem~\ref{prob:tff_t}, we give an explicit construction method for $\TFF_2$ on $G_{2,d}$.
  \item We establish bounds on the cardinalities of $\ECTFF_2$/$\EITFF_2$ on $G_{2,d}$.
\end{enumerate}

The organization of this paper is as follows.
In Section~2, we introduce $\TFF_t$ and related notions, and explain their relation to designs on Grassmann manifolds.
In Section~3, we prove a theorem that produces spherical designs from $\TFF_t$ on $G_{k,d}$.
In Section~4, we explicitly construct $\TFF_2$ on $G_{2,d}$ by taking unions of orbits of the hyperoctahedral group.
In Section~5, we discuss non-existence results and necessary conditions for $\ECTFF_2$ and $\EITFF_2$.


\section{Preliminaries}\label{sec:prelim}

\subsection{Spherical designs}\label{subsec:sph-design}

Let $d\in\mathbb N$.
We write $d\sigma$ for the normalized surface-area measure on $S^{d-1}$.
We also introduce the Pochhammer symbol
\[
(a)_0:=1,\qquad (a)_m:=a(a+1)\cdots(a+m-1)\quad(m\ge1).
\]

\begin{definition}\cite[Def.~5.1]{DGS1977}\label{def:spherical-design-cubature}
Let $t$, $d\in\mathbb N$. A finite set $X=\{x_1,\dots,x_n\}\subset S^{d-1}$ is called a (\emph{equal-weight}) \emph{spherical $t$-design} if, for every function 
$f:\mathbb R^d\to\mathbb R$ of total degree at most $t$, one has
\[
\int_{S^{d-1}} f(x)\,d\sigma(x)=\frac1n\sum_{j=1}^n f(x_j).
\]
\end{definition}

To state an equivalent characterization, we introduce scaled Gegenbauer polynomials.
Assume $d\ge 2$. For $\ell\in\mathbb Z_{\ge0}$, define a sequence of polynomials $Q_\ell^{(d)}(x)$ by
\[
Q_0^{(d)}(x):=1,\qquad Q_1^{(d)}(x):=x,
\]
and by the recurrence relation
\[
(\ell+d-2)\,Q_{\ell+1}^{(d)}(x)
=(2\ell+d-2)\,x\,Q_\ell^{(d)}(x)-\ell\,Q_{\ell-1}^{(d)}(x)
\qquad(\ell\ge1).
\]
Then $Q_\ell^{(d)}(1)=1$ holds for all $\ell$.

\begin{proposition}\cite{BB2009}\label{prop:sph-design-equiv}
Let $t$, $d\in\mathbb N$ and $X=\{x_1,\dots,x_n\}\subset S^{d-1}$. The following are equivalent:
\begin{enumerate}
\item[\rm (i)]
$X$ is a spherical $t$-design.
\item[\rm (ii)]
For every $1\le \ell\le t$,
\[
\sum_{i,j=1}^n Q_\ell^{(d)}\!\bigl(\langle x_i,x_j\rangle\bigr)=0
\]
holds, where $\langle\cdot,\cdot\rangle$ is the standard inner product on $\R^d$.
\item[\rm (iii)]
For every $y\in\mathbb R^d$ and every integer $0\le m\le t$,
\[
\sum_{j=1}^n \langle x_j,y\rangle^{m}
=
n\int_{S^{d-1}}\langle x,y\rangle^{m}\,d\sigma(x)
=
\begin{cases}
\displaystyle
n\,\frac{\left(\frac12\right)_{m/2}}{\left(\frac d2\right)_{m/2}}\,
\|y\|^{m}
& \text{if $m$ is even},\\[2ex]
0 & \text{if $m$ is odd}
\end{cases}
\]
holds.
\end{enumerate}
\end{proposition}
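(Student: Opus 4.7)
The plan is to prove the two equivalences (i)$\Leftrightarrow$(iii) and (i)$\Leftrightarrow$(ii) separately, using two standard tools: polarization of symmetric tensors for the former, and the Gegenbauer addition formula for the latter. Both arguments are classical and attributable to the references already cited; the purpose here is just to record them in our normalization.

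For (i)$\Leftrightarrow$(iii), I would first observe that, as $y$ ranges over $\R^d$, the polynomials $x \mapsto \langle x, y \rangle^m$ span the full space of homogeneous polynomials of degree $m$ on $\R^d$ (by polarization of symmetric tensors, or equivalently because the tensors $y^{\otimes m}$ span $\Sym^m(\R^d)$). Hence the cubature identity of (i) for all polynomials of total degree $\le t$ is equivalent to the family of identities
\[
\sum_{j=1}^n \langle x_j, y\rangle^m = n \int_{S^{d-1}} \langle x, y \rangle^m \, d\sigma(x)
\]
for all $y \in \R^d$ and $0 \le m \le t$. The integral is then evaluated by invoking rotational invariance of $d\sigma$ to reduce to $y=\|y\|e_1$, giving $\|y\|^m \int_{S^{d-1}} x_1^{\,m}\, d\sigma(x)$. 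A standard Beta-integral computation yields the Pochhammer expression for even $m$, while odd $m$ vanish by antipodal symmetry.

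For (i)$\Leftrightarrow$(ii), I would use the harmonic decomposition: the restriction to $S^{d-1}$ of polynomials of total degree $\le t$ equals $\bigoplus_{\ell=0}^{t} \mathcal{H}_\ell|_{S^{d-1}}$, where $\mathcal{H}_\ell$ denotes the space of harmonic homogeneous polynomials of degree $\ell$. Since $\int_{S^{d-1}} f\, d\sigma = 0$ for $f \in \mathcal{H}_\ell$ with $\ell \ge 1$, condition (i) is equivalent to $\sum_j f(x_j) = 0$ for every $f \in \mathcal{H}_\ell$ with $1 \le \ell \le t$. Fix an orthonormal basis $\{Y_{\ell,k}\}_{k=1}^{h_\ell}$ of $\mathcal{H}_\ell$ in $L^2(S^{d-1}, d\sigma)$, with $h_\ell = \dim \mathcal{H}_\ell$. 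The addition formula reads
\[
\sum_{k=1}^{h_\ell} Y_{\ell,k}(x)\, Y_{\ell,k}(y) = h_\ell\, Q_\ell^{(d)}\bigl(\langle x, y \rangle\bigr),
\]
from which
\[
\sum_{i,j=1}^n Q_\ell^{(d)}\bigl(\langle x_i, x_j\rangle\bigr) = \frac{1}{h_\ell} \sum_{k=1}^{h_\ell} \Bigl(\sum_{j=1}^n Y_{\ell,k}(x_j)\Bigr)^{\!2},
\]
a sum of squares that vanishes exactly when $\sum_j Y_{\ell,k}(x_j) = 0$ for all $k$. Equivalently, (ii) holds iff all nonzero-degree harmonics up to degree $t$ are annihilated by $X$, which is the reformulation of (i) just obtained.

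The main technical obstacle is verifying that the polynomials $Q_\ell^{(d)}$ defined by the recurrence in the statement are indeed the Gegenbauer polynomials appearing in the addition formula, normalized by $Q_\ell^{(d)}(1)=1$. Concretely, one matches the stated recurrence against the classical three-term recurrence for $C_\ell^{((d-2)/2)}$ and rescales by the values $C_\ell^{((d-2)/2)}(1)$ to pass to the $Q_\ell^{(d)}$'s; this is a direct but bookkeeping-heavy calculation. Everything else — the Beta integral in (iii) and the harmonic decomposition plus addition formula in (ii) — is standard material on $L^2(S^{d-1})$.
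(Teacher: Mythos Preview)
Your proof sketch is correct and follows the standard route found in the literature (polarization for (i)$\Leftrightarrow$(iii), harmonic decomposition plus the addition formula for (i)$\Leftrightarrow$(ii)). Note, however, that the paper does not supply its own proof of this proposition: it is stated with a citation to \cite{BB2009} and no proof environment follows, so there is nothing to compare against beyond confirming that your argument matches the classical one from that reference.
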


As a natural generalization of spherical designs, we introduce the weighted version.

\begin{definition}\label{def:weighted-sph-design}
Let $t$, $d\in\N$.
A pair consisting of a finite set $X=\{x_1,\dots,x_n\}\subset S^{d-1}$ and positive weights
$\lambda_1,\dots,\lambda_n>0$ is called a \emph{weighted spherical $t$-design} if, for every function
$f:\R^d\to\R$ of total degree at most $t$, one has
\[
\int_{S^{d-1}} f(x)\,d\sigma(x)
=
\frac{1}{\Lambda}\sum_{j=1}^n \lambda_j\, f(x_j),
\qquad
\Lambda:=\sum_{j=1}^n \lambda_j.
\]
\end{definition}

\begin{remark}\label{rem:weighted-to-equalweight}
If the weights are constant, then a weighted spherical $t$-design is the same as an (equal-weight) spherical $t$-design.
\end{remark}

\begin{lemma}\label{lem:weighted-sph-moment}
Let $t$, $d\in\N$, and let $X=\{x_1,\dots,x_n\}\subset S^{d-1}$ be a finite set with positive weights
$\lambda_1,\dots,\lambda_n>0$. Set $\Lambda:=\sum_{j=1}^n\lambda_j$.
Then the following are equivalent:
\begin{enumerate}
\item[\rm (i)]
$(X,\{\lambda_j\})$ is a weighted spherical $t$-design.
\item[\rm (ii)]
For every $y\in\R^d$ and every integer $0\le m\le t$,
\[
\sum_{j=1}^n \lambda_j\,\langle x_j,y\rangle^{m}
=
\Lambda\int_{S^{d-1}}\langle x,y\rangle^{m}\,d\sigma(x)
=
\begin{cases}
\displaystyle
\Lambda\,
\frac{\left(\frac12\right)_{m/2}}{\left(\frac d2\right)_{m/2}}\,
\|y\|^{m}
& \text{if $m$ is even},\\[2ex]
0 & \text{if $m$ is odd}.
\end{cases}
\]
\end{enumerate}
\end{lemma}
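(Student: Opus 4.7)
The plan is to prove Lemma~\ref{lem:weighted-sph-moment} as a weighted analogue of Proposition~\ref{prop:sph-design-equiv}~(iii); the structure of the argument carries over essentially verbatim, with the averaging operator $\frac{1}{n}\sum$ replaced by $\frac{1}{\Lambda}\sum \lambda_j$. Both implications follow from the linearity of the cubature identity in the test function, combined with the explicit evaluation of the moments $\int_{S^{d-1}} \langle x, y\rangle^m \, d\sigma(x)$.

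For (i) $\Rightarrow$ (ii): I would apply Definition~\ref{def:weighted-sph-design} to the test function $f(x) := \langle x, y\rangle^m$, which is a polynomial of total degree $m \le t$. The left-hand side of the cubature formula becomes $\frac{1}{\Lambda}\sum_j \lambda_j \langle x_j, y\rangle^m$, while the right-hand side is $\int_{S^{d-1}} \langle x, y\rangle^m \, d\sigma(x)$. By rotation invariance of $d\sigma$ one reduces to $y = \|y\|\, e_1$, and the closed form stated in (ii) then follows from the classical beta-function evaluation of $\int_{S^{d-1}} x_1^m \, d\sigma(x)$.

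For (ii) $\Rightarrow$ (i): By linearity it suffices to verify the cubature identity on a spanning set of the polynomials of total degree at most $t$; I would use the monomials $x^\alpha$ with $|\alpha|\le t$. For each fixed $m \le t$, the multinomial identity
\[
\langle x, y\rangle^m = \sum_{|\alpha|=m} \binom{m}{\alpha}\, x^\alpha\, y^\alpha
\]
exhibits both sides of (ii) as polynomials in $y$ that are homogeneous of degree $m$. Matching the coefficients of $y^\alpha$ yields
\[
\sum_{j=1}^n \lambda_j\, x_j^\alpha
= \Lambda \int_{S^{d-1}} x^\alpha \, d\sigma(x)
\qquad (|\alpha|\le t),
\]
and by linearity the cubature identity extends to every polynomial of degree at most $t$, giving (i).

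There is no substantive obstacle: the statement is a direct weighted reformulation of the equal-weight case, and the only nontrivial input—the explicit moment on the sphere, which also produces the Pochhammer-ratio constant—is classical.
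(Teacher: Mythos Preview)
Your proposal is correct and matches the paper's own treatment: the paper simply states that the proof is the same as the standard equal-weight argument (Proposition~\ref{prop:sph-design-equiv}(iii)) with the average $\frac{1}{n}\sum$ replaced by the weighted average $\frac{1}{\Lambda}\sum\lambda_j$, and omits the details. Your sketch spells out exactly this routine reduction, so there is nothing to add.
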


The proof is the same as the standard argument in the equal-weight case, and we omit it since it only requires replacing the average by the weighted average.

\subsection{Grassmann manifolds, principal angles, and equiangularity}\label{subsec:grass-geometry}

We will work with finite subsets of Grassmann manifolds.
As an analogue of the inner product for point configurations on spheres, we recall the notion of principal angles. Henceforth, we fix integers $t,d,k\in\mathbb N$ with $1\le k\le d$.


Let $V,W\in G_{k,d}$. Choose orthonormal bases $v_1,\dots,v_k$ of $V$ and $w_1,\dots,w_k$ of $W$, and set
\[
Q_V:=\bigl[v_1\ \cdots\ v_k\bigr],\qquad
Q_W:=\bigl[w_1\ \cdots\ w_k\bigr]\in\R^{d\times k},
\]
so that $Q_V^{\mathsf T}Q_V=Q_W^{\mathsf T}Q_W=I_k$.

\begin{proposition}\label{prop:sv-bound}
Let $A:=Q_V^{\mathsf T}Q_W$. Its singular values $\sigma_1\ge\cdots\ge\sigma_{k}\ge0$ satisfy $0\le\sigma_i\le 1$.
\end{proposition}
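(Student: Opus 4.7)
The plan is short because there is almost no content beyond unwinding definitions: $\sigma_i\ge 0$ holds by the definition of singular values, so the only substantive point is $\sigma_1\le 1$, i.e.\ that $A=Q_V^{\mathsf T}Q_W$ is a contraction on $\R^k$. I would prove this by submultiplicativity of the operator norm,
\[
\sigma_1=\|A\|_{\mathrm{op}}=\|Q_V^{\mathsf T}Q_W\|_{\mathrm{op}}
\le \|Q_V^{\mathsf T}\|_{\mathrm{op}}\,\|Q_W\|_{\mathrm{op}},
\]
and then observe that each factor is at most $1$: because $Q_V$ and $Q_W$ have orthonormal columns, the maps $Q_V,Q_W:\R^k\to\R^d$ are isometric embeddings, and their adjoints $Q_V^{\mathsf T},Q_W^{\mathsf T}$ are orthogonal projections onto $V$ and $W$ expressed in these bases, hence contractions.

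As an equally clean alternative (which I would probably prefer, since it explains where the bound comes from structurally), I would argue at the level of $A^{\mathsf T}A$. Setting $P_V:=Q_VQ_V^{\mathsf T}$, which is the orthogonal projection onto $V$, one has
\[
A^{\mathsf T}A \;=\; Q_W^{\mathsf T}\,P_V\,Q_W.
\]
Since $P_V$ is an orthogonal projection, $0\preceq P_V\preceq I_d$ in the L\"owner order. Conjugating by $Q_W$ preserves the inequality, so
\[
0\;\preceq\; A^{\mathsf T}A \;\preceq\; Q_W^{\mathsf T}Q_W \;=\; I_k.
\]
The singular values $\sigma_i$ are the square roots of the eigenvalues of $A^{\mathsf T}A$, and these eigenvalues therefore lie in $[0,1]$, giving $0\le\sigma_i\le 1$.

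I do not anticipate a real obstacle here; this is a standard fact about compositions of partial isometries. The only mild thing to check is basis-independence: different choices of orthonormal bases of $V$ and $W$ replace $Q_V, Q_W$ by $Q_V U$ and $Q_W U'$ for orthogonal $U,U'\in O(k)$, which changes $A$ to $U^{\mathsf T}A U'$ and hence leaves the singular values unchanged. This matches the geometric picture that $\sigma_i=\cos\theta_i$ for the principal angles $\theta_i\in[0,\pi/2]$ between $V$ and $W$, which is precisely the generalization of $|\langle u,v\rangle|\in[0,1]$ from lines to $k$-planes that motivates the subsequent discussion.
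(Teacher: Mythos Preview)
Your proposal is correct and essentially matches the paper's proof: the paper shows directly that $\|Ax\|=\|Q_V^{\mathsf T}(Q_Wx)\|\le\|Q_Wx\|=\|x\|$ for all $x$, which is exactly your operator-norm submultiplicativity argument unpacked, and hence $\sigma_1\le1$. Your alternative via $0\preceq A^{\mathsf T}A=Q_W^{\mathsf T}P_VQ_W\preceq I_k$ is an equivalent reformulation, and your remark on basis-independence is handled separately in the paper as Remark~\ref{rem:principal-angles-well-defined}.
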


\begin{proof}
For any $x\in\R^k$,
\[
\|Ax\|=\|Q_V^{\mathsf T}(Q_Wx)\|\le \|Q_Wx\|=\|x\|,
\]
since $Q_V^{\mathsf T}$ is a contraction (its rows are orthonormal) and $Q_W$ is an isometry.
Hence the largest singular value satisfies $\sigma_1=\max_{\|x\|=1}\|Ax\|\le 1$, so $0\le\sigma_i\le\sigma_1\le 1$.
\end{proof}

\begin{definition}\cite[Section~2]{BCN2002}\label{def:principal-angles}
Let $\sigma_1\ge\cdots\ge\sigma_{k}\ge0$ be the singular values of $A=Q_V^{\mathsf T}Q_W$.
By Proposition~\ref{prop:sv-bound}, $\sigma_i\in[0,1]$, and we define
\[
\theta_i(V,W):=\arccos(\sigma_i)\in\Bigl[0,\frac{\pi}{2}\Bigr]\qquad(i=1,\dots,k).
\]
These are called the \emph{principal angles} between $V$ and $W$.
\end{definition}

\begin{remark}\label{rem:principal-angles-well-defined}
If we choose different orthonormal bases, then $Q_V'=Q_VU$ and $Q_W'=Q_WV$ for some $U,V\in O(k)$, and hence
\[
(Q_V')^{\mathsf T}Q_W' = U^{\mathsf T}(Q_V^{\mathsf T}Q_W)V.
\]
Since singular values are invariant under left and right multiplication by orthogonal matrices, the definition of
principal angles is independent of the choice of orthonormal bases.
\end{remark}

\begin{definition}\label{def:EI}
A finite set $D=\{V_i\}_{i=1}^N\subset G_{k,d}$ is called \emph{equi-isoclinic} (EI) if there exists $\theta\in[0,\pi/2]$
such that for any distinct $i\neq j\in[N]$ and any $\ell\in[k]$,
\[
\theta_\ell(V_i,V_j)=\theta
\]
holds.
\end{definition}

\begin{definition}\cite{CHS1996}\label{def:chordal}
For $V,W \in G_{k,d}$,
\[
d_C(V,W):=\sqrt{\sum_{\ell=1}^k \sin^2\!\theta_\ell(V,W)}.
\]
This quantity is called the \emph{chordal distance} between $V$ and $W$.
\end{definition}

\begin{definition}\label{def:EC}
A finite set $D=\{V_i\}_{i=1}^N\subset G_{k,d}$ is called \emph{equi-chordal} (EC) if for any distinct $i\neq j\in[N]$,
\[
d_C(V_i,V_j)=\text{\rm const.}
\]
\end{definition}

\subsection{Grassmannian designs}\label{subsec:grass-design}

We write $d\sigma_{k,d}$ for the $O(d)$-invariant probability measure on the Grassmannian $G_{k,d}$.

\begin{proposition}\cite[Thm.~3.2, Def.~3.3]{BCN2002}\label{prop:Irr}
The space $L^2(G_{k,d})$ decomposes into irreducible $O(d)$-subrepresentations as
\[
L^2(G_{k,d})
\;=\;
\bigoplus_{\ell(\mu)\le k} \,  \mathsf{H}^{\, k, d}_{2\mu}.
\]
Here $\mu=(\mu_1,\dots,\mu_r)$ is a partition (i.e., $\mu_1\ge\cdots\ge\mu_r\ge1$) or the empty partition $\mu=\varnothing$.
We set $\deg(\mu):=\sum_{i=1}^r\mu_i$ and $\ell(\mu):=r$, with the convention $\deg(\varnothing)=\ell(\varnothing)=0$.
Moreover, $\mathsf{H}^{\,k,d}_{2\mu}\subset L^2(G_{k,d})$ denotes the irreducible component of type
$2\mu:=(2\mu_1,\dots,2\mu_r)$.
\end{proposition}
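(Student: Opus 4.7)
The plan is to realize $G_{k,d}$ as a compact symmetric space and apply Gelfand-pair together with Cartan--Helgason theory. First I would identify $G_{k,d}\cong O(d)/(O(k)\times O(d-k))$ as an $O(d)$-homogeneous space, with $K:=O(k)\times O(d-k)$ the stabilizer of a fixed $k$-plane. The pair $(O(d),K)$ is a classical Gelfand pair, so by the Peter--Weyl theorem for homogeneous spaces combined with the Gelfand property, $L^2(G_{k,d})$ decomposes as a \emph{multiplicity-free} Hilbert direct sum
\[
L^2(G_{k,d})\;\cong\;\bigoplus_{\pi} V_\pi
\]
indexed by the equivalence classes of irreducible $O(d)$-representations $\pi$ admitting a nonzero $K$-fixed vector.

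The next step is to enumerate the $K$-spherical irreducibles by invoking the Cartan--Helgason theorem. For the symmetric pair $(O(d),O(k)\times O(d-k))$, one computes the restricted root system and applies the admissibility criterion on highest weights. The output is that the $K$-spherical irreducibles of $O(d)$ are precisely those whose highest weight, expressed in the standard coordinate basis, has the form $(2\mu_1,2\mu_2,\dots,2\mu_r,0,\dots,0)$ with $\mu_1\ge\dots\ge\mu_r\ge 1$ and $r\le \min(k,d-k)$; in other words, all parts are even and the length is bounded by the rank of the symmetric space. Declaring $\mathsf{H}^{k,d}_{2\mu}$ to be the irreducible subspace of $L^2(G_{k,d})$ with this highest weight then yields the asserted decomposition. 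The upper bound $\ell(\mu)\le k$ in the statement implicitly uses the symmetry $G_{k,d}\cong G_{d-k,d}$, so one may assume $k\le d-k$ without loss.

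The main obstacle will be the Cartan--Helgason step: one has to identify the correct restricted root data for this specific symmetric pair and verify that the admissibility condition produces exactly the ``all even parts'' highest weights (this is where the factor of $2$ in $2\mu$ enters, coming from the branching of the symmetric-space involution). An alternative, more hands-on route would be to construct the $\mathsf{H}^{k,d}_{2\mu}$ explicitly --- for instance via James--Constantine zonal polynomials, or as images in $L^2(G_{k,d})$ of suitable $K$-invariant harmonic polynomials on $\R^{d\times k}$ pulled back through the Stiefel fibration $V_{k,d}\to G_{k,d}$ --- and then to check irreducibility and pairwise orthogonality by a direct computation with the $O(d)$-action. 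This avoids the general theorem but is computationally heavier and requires a separate completeness argument (a Stone--Weierstrass or zonal-orthogonality type statement) to show that the listed subspaces exhaust $L^2(G_{k,d})$.
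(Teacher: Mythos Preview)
The paper does not supply its own proof of this proposition: it is quoted verbatim from \cite[Thm.~3.2, Def.~3.3]{BCN2002} and used as a black box. So there is no ``paper's proof'' to compare against. Your outline via the Gelfand-pair/Cartan--Helgason route is the standard and correct argument for this decomposition, and is essentially what underlies the cited reference (which in turn points back to classical sources such as James--Constantine and Helgason). Your remark that the sharp bound is $\ell(\mu)\le\min(k,d-k)$ rather than $\ell(\mu)\le k$ is accurate; the statement as written is simply not tight when $k>d-k$, but the paper only ever uses the case $k\le d/2$, so this does not cause problems downstream.
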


For $t\in\N$, define a subspace of functions on $G_{k,d}$ by
\[
\Pol_{2t}(G_{k,d})=\Pol_{2t+1}(G_{k,d})
\;:=\;
\bigoplus_{\substack{\ell(\mu)\le k\\ \deg(\mu)\le t}} \mathsf{H}^{\,k,d}_{2\mu}.
\]
We further set
\[
\Pol^{(1)}_{2t}(G_{k,d})=\Pol^{(1)}_{2t+1}(G_{k,d})
\;:=\;
\bigoplus_{1\le j\le t} \mathsf{H}^{\,k,d}_{(2j)}
\;\subset\; \Pol_{2t}(G_{k,d}).
\]


\begin{definition}\cite[Prop.~4.2]{BCN2002}\label{def:Grass-design}
A finite set $D=\{V_j\}_{j=1}^n\subset G_{k,d}$ is called a \emph{Grassmann $2t$-design} if, for every function
$f\in \Pol_{2t}(G_{k,d})$, one has
\[
\int_{G_{k,d}} f(V)\, d\sigma_{k,d}(V)=\frac1n\sum_{j=1}^n f(V_j).
\]
\end{definition}

There are several equivalent characterizations of Grassmann $2t$-designs. In particular, we will use the criterion via zonal polynomials.
For $V,W\in G_{k,d}$, set
\[
y_i(V,W):=\cos^2\theta_i(V,W)\qquad(1\le i\le k).
\]
Fix a base point $V_0\in G_{k,d}$. Its stabilizer satisfies
\(
\Stab(V_0)\simeq O(k)\times O(d-k).
\)
Since each irreducible component $\mathsf H^{k,d}_{2\mu}$ ($\ell(\mu)\le k$) occurs in $L^2(G_{k,d})$ with multiplicity one,
the $\,\Stab(V_0)$-fixed subspace
\[
\bigl(\mathsf H^{k,d}_{2\mu}\bigr)^{\Stab(V_0)}
:=\{\,f\in \mathsf H^{k,d}_{2\mu}\mid f(g\cdot V)=f(V)\ \ (\forall g\in\Stab(V_0),\ \forall V\in G_{k,d})\,\}
\]
is one-dimensional. Therefore there is a nonzero $\Stab(V_0)$-invariant function $f$ in $\mathsf H^{k,d}_{2\mu}$, unique up to scalar multiple.
We call such a function $f$ a \emph{zonal polynomial} of type $2\mu$ with respect to $V_0$. We then define $P_{2\mu}=f/f(V_0)\in \bigl(\mathsf H^{k,d}_{2\mu}\bigr)^{\Stab(V_0)}.$ By $\Stab(V_0)$-invariance, $P_{2\mu}(V)$ depends only on $y_i(V,V_0)=\cos^2\theta_i(V,V_0)$, and hence it may be regarded as a symmetric polynomial in $k$ variables, which we also denote by $P_{2\mu}$, so that
\[
P_{2\mu}(V)=P_{2\mu}\bigl(y_1(V,V_0),\dots,y_k(V,V_0)\bigr).
\]
In particular, the normalization $P_{2\mu}(V_0)=1$ is equivalent to $P_{2\mu}(1,\dots,1)=1$.
Moreover, for arbitrary $V,W\in G_{k,d}$ we set
\[
P_{2\mu}(V,W):=P_{2\mu}\bigl(y_1(V,W),\dots,y_k(V,W)\bigr).
\]
\begin{remark}\label{rem:grass-zonal-low}
Assume $k\ge2$; the component of type $(2,2)$ appears only in this range (for $k=1$ the term $P_{(2,2)}$ is absent).
We list the explicit low-degree zonal polynomials $P_{(2)},P_{(4)},P_{(2,2)}$.
(see \cite{JC1974}).

\begin{equation}\label{eq:P_2}
P_{(2)}=\frac{k}{k-d}-\frac{d}{k-d}\,c_2,
\qquad \text{where}\qquad
c_2=\frac1k\sum_{i=1}^k y_i.
\end{equation}

\begin{equation}\label{eq:P_4}
P_{(4)}=\frac{P'_{(4)}}{P'_{(4)}(1,\dots,1)},
\qquad \text{where}\qquad
\begin{aligned}
P'_{(4)}
&=
1-\frac{2(d+2)}{k}\,c_2+\frac{(d+2)(d+4)}{k(k+2)}\,c_4,\\
c_4
&=\frac{3}{k(k+2)}
\left(
\sum_{i=1}^k y_i^2+\frac{2}{3}\sum_{1\le i<j\le k}y_iy_j
\right).
\end{aligned}
\end{equation}

\begin{equation}\label{eq:P_22}
P_{(2,2)}=\frac{P'_{(2,2)}}{P'_{(2,2)}(1,\dots,1)},
\qquad \text{where}\qquad
\begin{aligned}
P'_{(2,2)}
&=
1-\frac{2(d-1)}{k}\,c_2+\frac{(d-1)(d-2)}{k(k-1)}\,c_{22},\\
c_{22}
&=\frac{2}{k(k-1)}\sum_{1\le i<j\le k}y_iy_j.
\end{aligned}
\end{equation}

\end{remark}

\begin{proposition}\cite[Prop.~4.2]{BCN2002}\label{prop:grass-design-equiv}
Let $D\subset G_{k,d}$. The following are equivalent:
\begin{enumerate}
\item[\rm (i)]
$D$ is a Grassmann $2t$-design.
\item[\rm (ii)]
For every nonzero partition $\mu$ satisfying $\deg(\mu)\le t$,
\[
\sum_{V,W \in D} P_{2\mu}\bigl(V,W\bigr)=0
\]
holds.
\end{enumerate}
\end{proposition}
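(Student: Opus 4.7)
The plan is to translate the Grassmann $2t$-design condition into a spectral statement via the irreducible decomposition of $\Pol_{2t}(G_{k,d})$ given in Proposition~\ref{prop:Irr}, and then identify each sum $\sum_{V,W\in D}P_{2\mu}(V,W)$ as a squared $L^{2}$-norm of the projection of the empirical measure of $D$ onto the irreducible component $\mathsf H^{k,d}_{2\mu}$.

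First I would observe that (i) asks $\frac{1}{|D|}\sum_{V\in D}f(V)=\int f\,d\sigma_{k,d}$ for all $f\in\Pol_{2t}(G_{k,d})$. The trivial component $\mathsf H^{k,d}_{\varnothing}$ (the constants) gives an automatic identity, so (i) is equivalent to $\sum_{V\in D}f(V)=0$ for every $f\in\mathsf H^{k,d}_{2\mu}$ with $\ell(\mu)\le k$, $\deg(\mu)\le t$, and $\mu\neq\varnothing$. This reduces the equivalence to one irreducible component at a time.

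Next I would replace $P_{2\mu}$ by the reproducing kernel of $\mathsf H^{k,d}_{2\mu}$. Since this space is finite dimensional, it admits a reproducing kernel $K_\mu(V,W)$, and $O(d)$-invariance of the $L^{2}$ inner product forces $K_\mu$ to be $O(d)$-diagonally invariant. Hence $K_\mu(V_0,\cdot)$ lies in the $\Stab(V_0)$-fixed subspace of $\mathsf H^{k,d}_{2\mu}$, which is one-dimensional by the multiplicity-one statement in Proposition~\ref{prop:Irr}; so $K_\mu(V_0,\cdot)=c_\mu P_{2\mu}(V_0,\cdot)$ for some constant $c_\mu$, and evaluating at $V=V_0$ together with the standard identity $K_\mu(V,V)=\dim\mathsf H^{k,d}_{2\mu}$ (obtained by expanding $K_\mu$ in an orthonormal basis and using $O(d)$-homogeneity) and the normalization $P_{2\mu}(V_0,V_0)=1$ pins down $c_\mu=\dim\mathsf H^{k,d}_{2\mu}>0$. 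Diagonal $O(d)$-invariance then extends the identity to $K_\mu=c_\mu P_{2\mu}$ everywhere.

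The final step is a one-line reproducing-kernel computation. Setting $S_\mu:=\sum_{V\in D}K_\mu(V,\cdot)\in\mathsf H^{k,d}_{2\mu}$, which represents the $\mathsf H^{k,d}_{2\mu}$-component of $\sum_{V\in D}\delta_V$, the reproducing property gives
\[
\|S_\mu\|^{2}=\sum_{V,W\in D}K_\mu(V,W)=c_\mu\sum_{V,W\in D}P_{2\mu}(V,W).
\]
Thus $\sum_{V,W\in D}P_{2\mu}(V,W)=0$ iff $S_\mu=0$, iff $\sum_{V\in D}f(V)=\langle S_\mu,f\rangle=0$ for every $f\in\mathsf H^{k,d}_{2\mu}$. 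Combining across all admissible nonzero $\mu$ with $\deg(\mu)\le t$ yields (i)$\Leftrightarrow$(ii). The main obstacle I anticipate is the careful identification $K_\mu=c_\mu P_{2\mu}$, since it relies simultaneously on multiplicity-one, $\Stab(V_0)$-invariance, and the trace normalization of the reproducing kernel; once that is granted, positive semidefiniteness of $\|S_\mu\|^{2}$ handles both implications at once.
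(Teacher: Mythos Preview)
The paper does not give its own proof of this proposition; it simply cites \cite[Prop.~4.2]{BCN2002}. Your argument is correct and is precisely the standard reproducing-kernel (addition-formula) proof used in that reference and in the analogous spherical setting: identify the zonal $P_{2\mu}$ with a positive multiple of the reproducing kernel of $\mathsf H^{k,d}_{2\mu}$ via multiplicity one, then observe that $\sum_{V,W\in D}P_{2\mu}(V,W)$ is a positive multiple of $\bigl\|\sum_{V\in D}K_\mu(V,\cdot)\bigr\|^2$. The only point worth stating a little more explicitly is why $c_\mu>0$ (you do this via $K_\mu(V,V)=\dim\mathsf H^{k,d}_{2\mu}$), since that sign is what makes the vanishing of the double sum equivalent to, rather than merely implied by, the vanishing of $S_\mu$.
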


\subsection{Tight $t$-fusion frames}\label{subsec:fusionframe}

\begin{definition}\cite[Def.~4.1]{BE2013}\label{def:p-fusion-frame}
Let $\{V_j\}_{j=1}^n\subset G_{k,d}$ be a finite set and let $\{\omega_j\}_{j=1}^n$ be positive weights.
For each $j$, let $P_{V_j}:\mathbb{R}^d\to\mathbb{R}^d$ denote the orthogonal projection onto $V_j$.
If there exist constants $A,B>0$ such that for all $x\in\mathbb{R}^d$,
\[
A\,\|x\|^{2t}\ \le\ \sum_{j=1}^n \omega_j\,\|P_{V_j}x\|^{2t}\ \le\ B\,\|x\|^{2t},
\]
then $\{(V_j,\omega_j)\}_{j=1}^n$ is called a \emph{$t$-fusion frame}.
If, in particular, $A=B$, then it is called a \emph{tight $t$-fusion frame}.
In the equal-weight case, we call $\{(V_j,\omega_j)\}_{j=1}^n$ an \emph{equal-weight tight $t$-fusion frame} and often omit the weights, simply writing $\{V_j\}_{j=1}^n$.

\end{definition}

Bachoc--Ehler \cite{BE2013} characterized tight $t$-fusion frames as cubature formulas for $\Pol^{(1)}_{2t}(G_{k,d})$.

\begin{theorem}\cite[Thm.~5.3]{BE2013}\label{thm:BE-cubature}
Let $D\subset G_{k,d}$ and positive weights $\{\omega_V\}_{V\in D}$, and set
\[
\Omega:=\sum_{V\in D}\omega_V.
\]
Then the following are equivalent:
\begin{enumerate}
\item[\rm (i)]
$\{(V,\omega_V)\}_{V \in D}$ is a tight $t$-fusion frame.
\item[\rm (ii)]
For every $f\in \Pol^{(1)}_{2t}(G_{k,d})$,
\begin{equation}\label{eq:BE-cubature}
\sum_{V \in D} \omega_V\, f(V)
=
\Omega\int_{G_{k,d}} f(V)\, d\sigma_{k,d}(V).
\end{equation}
Equivalently, with $\tilde\omega_V:=\omega_V/\Omega$ one has
\[
\int_{G_{k,d}} f(V)\, d\sigma_{k,d}(V)=\sum_{V\in D}\tilde\omega_V f(V).
\]
\item[\rm (iii)]
For each $1\le \ell\le t$,
\[
\sum_{V,W \in D} \omega_V\omega_W\,
P_{(2\ell)}\!\bigl(V,W\bigr)=0
\]
holds.
\end{enumerate}
\end{theorem}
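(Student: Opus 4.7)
I would prove (ii)$\Leftrightarrow$(iii) first as a formal consequence of the irreducible decomposition in Proposition~\ref{prop:Irr}, and then (i)$\Leftrightarrow$(ii) via the ``sampling functions''
\[
\varphi_x(V):=\|P_V x\|^{2t}=(x^{\mathsf T}P_V x)^t,\qquad x\in\R^d.
\]

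For (ii)$\Leftrightarrow$(iii), decompose $f\in\Pol^{(1)}_{2t}(G_{k,d})$ as $f=c+\sum_{\ell=1}^t f_\ell$ with $c$ constant and $f_\ell\in\mathsf H^{k,d}_{(2\ell)}$. The cubature identity on $c$ is trivial, and since each $f_\ell$ is orthogonal to constants one has $\int f_\ell\,d\sigma_{k,d}=0$, so (ii) is equivalent to the vanishing of the linear form $f_\ell\mapsto\sum_{V\in D}\omega_V f_\ell(V)$ on every $\mathsf H^{k,d}_{(2\ell)}$, $1\le\ell\le t$. Because $\mathsf H^{k,d}_{(2\ell)}$ is irreducible and its $\Stab(V_0)$-fixed line is spanned by $P_{(2\ell)}(\cdot,V_0)$, a standard reproducing-kernel argument (average $f_\ell$ over $\Stab(V_0)$, then use Schur's lemma) identifies this vanishing with
\[
\sum_{V,W\in D}\omega_V\omega_W\, P_{(2\ell)}(V,W)=0,
\]
which gives (iii).

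For (i)$\Leftrightarrow$(ii), the plan rests on two claims about $\varphi_x$: (a) $\varphi_x\in\Pol^{(1)}_{2t}(G_{k,d})$ for every $x\in\R^d$, and $\Span\{\varphi_x:x\in\R^d\}=\Pol^{(1)}_{2t}(G_{k,d})$; and (b) $\int_{G_{k,d}}\varphi_x\,d\sigma_{k,d}=C_{t,d,k}\,\|x\|^{2t}$ for some $C_{t,d,k}>0$. Claim (b) is an invariance/homogeneity argument: the identity $\|P_V g x\|^{2}=\|P_{g^{-1}V}x\|^{2}$ together with $O(d)$-invariance of $d\sigma_{k,d}$ makes the integral an $O(d)$-invariant homogeneous polynomial of degree $2t$ in $x$, hence a scalar multiple of $\|x\|^{2t}$, and positivity of $\varphi_x$ forces $C_{t,d,k}>0$. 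Granted (a) and (b), applying (ii) to $\varphi_x$ yields $\sum_V\omega_V\|P_V x\|^{2t}=\Omega C_{t,d,k}\|x\|^{2t}$, which is precisely (i) with $A=B=\Omega C_{t,d,k}$; conversely, (i) is exactly (ii) on the family $\{\varphi_x\}$, and the spanning part of (a) extends it by linearity to all of $\Pol^{(1)}_{2t}(G_{k,d})$.

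The main obstacle is claim (a). I would handle it representation-theoretically: the map $x\mapsto\varphi_x$ is $O(d)$-equivariant and homogeneous of degree $2t$, so it corresponds to an $O(d)$-intertwiner $T:\Sym^{2t}(\R^d)\to L^2(G_{k,d})$ whose image is $\Span\{\varphi_x:x\in\R^d\}$. As an $O(d)$-module,
\[
\Sym^{2t}(\R^d)=\bigoplus_{\ell=0}^{t}\|x\|^{2(t-\ell)}\,\mathcal H^{2\ell}(\R^d)
\]
involves only the one-row irreducibles $\mathcal H^{2\ell}$; by Schur's lemma and the multiplicity-one statement in Proposition~\ref{prop:Irr}, $T$ maps into $\bigoplus_{\ell=0}^{t}\mathsf H^{k,d}_{(2\ell)}=\Pol^{(1)}_{2t}(G_{k,d})$, proving the containment part of (a). Non-triviality of each component $T|_{\mathcal H^{2\ell}}$—which is needed for the spanning statement—can be checked directly by computing $\int_{S^{d-1}} Y_{2\ell}(x)\,\varphi_x(V)\,d\sigma(x)$ for a chosen harmonic $Y_{2\ell}$ of degree $2\ell$ and verifying that it is a nonzero scalar multiple of $P_{(2\ell)}(V,V_0)$; irreducibility of $\mathsf H^{k,d}_{(2\ell)}$ then forces surjectivity onto that summand, completing (a).
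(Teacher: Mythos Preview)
The paper does not give its own proof of this theorem: it is quoted with the citation \cite[Thm.~5.3]{BE2013} in the heading and no argument follows. So there is nothing in the paper to compare your proposal against; the related containment $\varphi_x\in\Pol^{(1)}_{2t}(G_{k,d})$ and the value of $\int\varphi_x\,d\sigma_{k,d}$ are likewise only recorded with attribution in Lemma~\ref{lem:trace_moment_and_pol1_en}.

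Your outline is a correct reconstruction of the standard argument (and essentially what Bachoc--Ehler do). Two small clarifications. For (ii)$\Leftrightarrow$(iii), the phrase ``average over $\Stab(V_0)$, then Schur'' is a little oblique: what is actually used is the addition formula $P_{(2\ell)}(V,W)=c_\ell\sum_i e_i(V)e_i(W)$ for an orthonormal basis $\{e_i\}$ of $\mathsf H^{k,d}_{(2\ell)}$, which makes the double sum equal to $c_\ell\sum_i\bigl(\sum_V\omega_V e_i(V)\bigr)^2$ and hence non-negative, with vanishing precisely when the linear form $f\mapsto\sum_V\omega_V f(V)$ is zero on $\mathsf H^{k,d}_{(2\ell)}$. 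For (i)$\Rightarrow$(ii), your Schur argument is fine once you observe that $\mathcal H^{2\ell}(\R^d)$ and $\mathsf H^{k,d}_{(2\ell)}$ are the \emph{same} $O(d)$-type, so a nonzero intertwiner between them is automatically an isomorphism; the non-vanishing check you propose (pairing $\varphi_x$ against one explicit harmonic $Y_{2\ell}$) is exactly how this is done in \cite{BE2013}.
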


\begin{remark}\label{rem:design-implies-tff}
By Definition~\ref{def:Grass-design} and Theorem~\ref{thm:BE-cubature}, for any $t$ we have
\[
\text{Grassmann }2t\text{-design}\ \Longrightarrow\ \text{tight }t\text{-fusion frame},
\]
since $\Pol^{(1)}_{2t}(G_{k,d})\subset \Pol_{2t}(G_{k,d})$.
In general, the converse does not hold.
In particular, for $k\ge2$ and $t\ge2$ one has
\(
\Pol^{(1)}_{2t}(G_{k,d})\subsetneq \Pol_{2t}(G_{k,d})
\),
so tight $t$-fusion frames form a strictly weaker notion than Grassmann $2t$-designs.
(When $k=1$ or $t=1$, we have $\Pol^{(1)}_{2t}(G_{k,d})=\Pol_{2t}(G_{k,d})$.)
\end{remark}

\section{Spherical Designs Using Tight $t$-Fusion Frames}

In this section we present a general principle for constructing (weighted) spherical designs on higher-dimensional spheres
from tight $t$-fusion frames on $G_{k,d}$.

For each $V\in G_{k,d}$, write
\[
S(V):=\{x\in V:\|x\|=1\}\simeq S^{k-1}.
\]
Let $d\sigma_V$ denote the $O(V)\simeq O(k)$-invariant measure on $S(V)$.

For $y\in S^{d-1}$ and $m\in\N$, define
\[
f_{y,m}(V):=\bigl(\tr(P_VP_y)\bigr)^m\qquad (V\in G_{k,d}),
\]
where $P_V$ is the orthogonal projection onto $V$, and $P_y$ is the orthogonal projection onto the line $\R y$.

\begin{lemma}[{\cite[Remark~5.2, Thm.~5.3]{BE2013}}]\label{lem:trace_moment_and_pol1_en}
Let $d\in\N$ and $1\le k\le d$. For any $y\in S^{d-1}$ and $m\in\N$, the following hold:
\begin{enumerate}
\item $f_{y,m}\in \Pol^{(1)}_{2m}(G_{k,d})$. In particular, if $m\le t$, then $f_{y,m}\in \Pol^{(1)}_{2t}(G_{k,d})$.
\item
\[
  \int_{G_{k,d}} \bigl(\tr(P_VP_y)\bigr)^m\, d\sigma_{k,d}(V)
  =
  \frac{\bigl(\frac{k}{2}\bigr)_m}{\bigl(\frac{d}{2}\bigr)_m}.
\]
\end{enumerate}
\end{lemma}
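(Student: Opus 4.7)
The plan is to handle (2) first via an invariance rewrite plus a beta-distribution moment, and then to prove (1) by combining a polynomial-degree constraint with the $O(d-1)$-invariance of $f_{y,m}$.

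For (2), I first record that, since $\|y\|=1$,
\[
f_{y,m}(V)=\tr(P_VP_y)^m=(y^{\mathsf T}P_V y)^m=\|P_V y\|^{2m}.
\]
Writing a generic $V\in G_{k,d}$ as $V=gV_0$ with $V_0:=\Span(e_1,\ldots,e_k)$ fixed and $g$ Haar-random in $O(d)$, the $O(d)$-invariance of $d\sigma_{k,d}$ gives
\[
\int_{G_{k,d}}\|P_Vy\|^{2m}\,d\sigma_{k,d}(V)=\int_{O(d)}\|P_{V_0}(g^{-1}y)\|^{2m}\,dg=\int_{S^{d-1}}\|P_{V_0}x\|^{2m}\,d\sigma(x).
\]
Since $\|P_{V_0}x\|^{2}=x_1^{2}+\cdots+x_k^{2}$, and for $x$ uniform on $S^{d-1}$ this quantity is $\mathrm{Beta}(k/2,(d-k)/2)$-distributed, its $m$-th moment is
\[
\frac{B(k/2+m,(d-k)/2)}{B(k/2,(d-k)/2)}=\frac{\Gamma(k/2+m)\,\Gamma(d/2)}{\Gamma(k/2)\,\Gamma(d/2+m)}=\frac{\left(\tfrac{k}{2}\right)_m}{\left(\tfrac{d}{2}\right)_m},
\]
which is exactly the claim in (2).

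For (1), I would combine two complementary observations. First, expanding $(y^{\mathsf T}P_Vy)^m$ shows that $f_{y,m}$ is a polynomial of degree $m$ in the matrix entries of $P_V$. Since each entry of $P_V$, viewed as a function on $G_{k,d}$, lies in $\Pol_2(G_{k,d})=\mathsf{H}^{\,k,d}_{\varnothing}\oplus\mathsf{H}^{\,k,d}_{(2)}$, any degree-$m$ polynomial in these entries lies in $\Pol_{2m}(G_{k,d})=\bigoplus_{\ell(\mu)\le k,\,\deg(\mu)\le m}\mathsf{H}^{\,k,d}_{2\mu}$. Second, $f_{y,m}$ is invariant under the stabilizer $\Stab_{O(d)}(y)\cong O(d-1)$. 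By Frobenius reciprocity and the identification $L^2(S^{d-1})=L^2(O(d)/O(d-1))=\bigoplus_{n\ge0}\mathcal H_n$, only the one-row $O(d)$-irreps appear in $L^2(S^{d-1})$; hence an $O(d)$-irrep $\mathsf{H}^{\,k,d}_{2\mu}$ admits a nonzero $O(d-1)$-invariant vector only when $2\mu=(2j)$ for some $j\ge0$. Intersecting the two constraints,
\[
f_{y,m}\in\mathsf{H}^{\,k,d}_{\varnothing}\oplus\bigoplus_{j=1}^{m}\mathsf{H}^{\,k,d}_{(2j)},
\]
with the trivial component equal to the constant computed in (2). Subtracting this constant leaves an element of $\Pol^{(1)}_{2m}(G_{k,d})$.

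The main obstacle is the $O(d-1)$-branching input in (1): one must pin down which $O(d)$-irreps $\mathsf{H}^{\,k,d}_{2\mu}$ occurring in $L^2(G_{k,d})$ contain $O(d-1)$-invariants, and conclude that only the one-row signatures do. Once this standard representation-theoretic fact is in hand, everything else is bookkeeping --- the polynomial-degree bound from expanding $(y^{\mathsf T}P_Vy)^m$, the invariance rewrite for (2), and the Beta-moment computation are all routine. A minor technical point worth flagging is that $\Pol^{(1)}_{2m}$ as defined above strictly excludes the trivial component $\mathsf{H}^{\,k,d}_{\varnothing}$, so the statement of (1) should be understood modulo the nonzero constant from (2); this is harmless for the intended application to the tight $t$-fusion frame cubature criterion, where the constant term is automatically reproduced.
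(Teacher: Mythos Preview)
The paper does not supply its own proof of this lemma; it is quoted from \cite{BE2013} without argument. Your proof is correct on both counts. For (2), the invariance rewrite followed by the Beta$(k/2,(d-k)/2)$ moment is the standard computation. For (1), your route---bound the degree by expanding $(y^{\mathsf T}P_Vy)^m$ in the entries of $P_V$, then use $\Stab_{O(d)}(y)\cong O(d-1)$-invariance together with the branching fact that an $O(d)$-irreducible has a nonzero $O(d-1)$-fixed vector iff it is a one-row type---is the natural representation-theoretic argument, and the branching input you flag as the main obstacle is indeed the only nontrivial ingredient.

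Your closing caveat is also correct and worth stating plainly: with the paper's definition
\(
\Pol^{(1)}_{2m}(G_{k,d})=\bigoplus_{1\le j\le m}\mathsf{H}^{\,k,d}_{(2j)}
\)
(constants excluded), the literal assertion $f_{y,m}\in\Pol^{(1)}_{2m}$ is false, since $f_{y,m}$ has nonzero constant component $(k/2)_m/(d/2)_m$ by (2). What actually holds is $f_{y,m}\in\mathbb R\oplus\Pol^{(1)}_{2m}(G_{k,d})$. As you observe, this is harmless for the use in Theorem~\ref{thm:lifting-weighted}: the cubature identity \eqref{eq:BE-cubature} is trivially valid for constant functions (both sides equal $\Omega$ times the constant), so it extends automatically from $\Pol^{(1)}_{2t}$ to $\mathbb R\oplus\Pol^{(1)}_{2t}$.
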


\begin{theorem}\label{thm:lifting-weighted}
Let $t,s\in\N$. Let $D\subset G_{k,d}$ be a finite set with positive weights $\{\omega_V\}_{V\in D}$, and assume that
$(D,\{\omega_V\})$ is a tight $t$-fusion frame. Set
\[
\Omega:=\sum_{V\in D}\omega_V.
\]
For each $V\in D$, let $(Y_V,\{\lambda_{V,z}\}_{z\in Y_V})$ be a weighted spherical $s$-design on $S(V)$, and assume that
\[
\Lambda:=\sum_{z\in Y_V}\lambda_{V,z}
\]
is independent of $V$.
Define a weighted finite set $(Z,w)$ on $S^{d-1}$ as follows:
\[
Z:=\bigcup_{V\in D} Y_V \subset S^{d-1},
\qquad
w(z):=\sum_{\substack{V\in D\\ z\in Y_V}}\omega_V\,\lambda_{V,z}\quad (z\in Z).
\]
Then $(Z,w)$ is a weighted spherical $r$-design on $S^{d-1}$ with
\(
r=\min\{s,\,2t+1\}.
\)
\end{theorem}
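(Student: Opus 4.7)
The plan is to verify the moment condition of Lemma~\ref{lem:weighted-sph-moment}: for every $y\in S^{d-1}$ and every integer $0\le m\le r$, I need to show
\[
\sum_{z\in Z} w(z)\,\langle z,y\rangle^{m}
=\Omega\Lambda\int_{S^{d-1}}\langle x,y\rangle^{m}\,d\sigma(x),
\]
since $\sum_{z\in Z}w(z)=\sum_{V\in D}\omega_V\sum_{z\in Y_V}\lambda_{V,z}=\Omega\Lambda$ by the independence of $\Lambda$ from $V$. First I would unfold the definition of $w$ and swap the sums to obtain
\[
\sum_{z\in Z} w(z)\,\langle z,y\rangle^{m}
=\sum_{V\in D}\omega_V\sum_{z\in Y_V}\lambda_{V,z}\,\langle z,y\rangle^{m}.
\]

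Next, for each fixed $V\in D$, since $z\in S(V)$ one has $\langle z,y\rangle=\langle z,P_V y\rangle$. Assuming $m\le s$, the weighted spherical $s$-design property of $(Y_V,\{\lambda_{V,z}\})$ on $S(V)\simeq S^{k-1}$ combined with Lemma~\ref{lem:weighted-sph-moment}(ii) gives
\[
\sum_{z\in Y_V}\lambda_{V,z}\,\langle z,P_V y\rangle^{m}
=
\begin{cases}
\Lambda\,\dfrac{\left(\frac12\right)_{m/2}}{\left(\frac k2\right)_{m/2}}\,\|P_V y\|^{m} & (m\text{ even}),\\[1ex]
0 & (m\text{ odd}).
\end{cases}
\]
For odd $m\le s$ this already yields zero on both sides (using $\|y\|=1$ in Lemma~\ref{lem:weighted-sph-moment}), and in particular the odd value $m=2t+1$ requires nothing more than $2t+1\le s$. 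For even $m$, I would use the identity $\|P_V y\|^{2}=y^{\mathsf T}P_V y=\tr(P_V P_y)$ (valid because $\|y\|=1$), so that $\|P_V y\|^{m}=\bigl(\tr(P_VP_y)\bigr)^{m/2}=f_{y,m/2}(V)$.

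At this point the sum over $V$ becomes
\[
\Lambda\,\frac{\left(\frac12\right)_{m/2}}{\left(\frac k2\right)_{m/2}}\sum_{V\in D}\omega_V\,f_{y,m/2}(V).
\]
The crucial step is to apply Theorem~\ref{thm:BE-cubature} to the inner sum. By Lemma~\ref{lem:trace_moment_and_pol1_en}(1), $f_{y,m/2}\in\Pol^{(1)}_{m}(G_{k,d})$, and this is contained in $\Pol^{(1)}_{2t}(G_{k,d})$ precisely when $m/2\le t$, i.e.\ $m\le 2t$. Since $m$ here is an even integer with $m\le r\le 2t+1$, this condition is automatic. Invoking Theorem~\ref{thm:BE-cubature} and Lemma~\ref{lem:trace_moment_and_pol1_en}(2) gives
\[
\sum_{V\in D}\omega_V\,f_{y,m/2}(V)=\Omega\,\frac{\left(\frac k2\right)_{m/2}}{\left(\frac d2\right)_{m/2}},
\]
and the Pochhammer factors $\left(\frac{k}{2}\right)_{m/2}$ telescope, yielding $\Omega\Lambda\,\dfrac{(1/2)_{m/2}}{(d/2)_{m/2}}$, which matches the right-hand side of Lemma~\ref{lem:weighted-sph-moment}.

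I expect the only subtle point to be the bookkeeping of which degrees survive: the $s$-design handles all $m\le s$ automatically and trivially disposes of odd degrees, while the tight $t$-fusion frame hypothesis is only needed for \emph{even} $m\le 2t$, which is exactly why the sharp exponent is $r=\min\{s,2t+1\}$ rather than $\min\{s,2t\}$. The identification $\|P_Vy\|^{m}=f_{y,m/2}(V)$ and the membership $f_{y,m/2}\in\Pol^{(1)}_{2t}(G_{k,d})$ are what bridge the spherical cubature on $S(V)$ with the Grassmannian cubature provided by the tight $t$-fusion frame; everything else is a routine cancellation of Pochhammer symbols.
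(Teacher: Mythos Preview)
Your proof is correct and follows essentially the same approach as the paper: both reduce to the moment criterion of Lemma~\ref{lem:weighted-sph-moment}, unfold the double sum, replace $\langle z,y\rangle$ by $\langle z,P_Vy\rangle$, apply the weighted $s$-design property on each $S(V)$ to obtain the $(1/2)_{m/2}/(k/2)_{m/2}$ factor and $\|P_Vy\|^m$, rewrite $\|P_Vy\|^2$ as $\tr(P_VP_y)$, and then invoke Lemma~\ref{lem:trace_moment_and_pol1_en} together with Theorem~\ref{thm:BE-cubature} to replace the Grassmannian sum by $(k/2)_{m/2}/(d/2)_{m/2}$. Your restriction to $y\in S^{d-1}$ (versus the paper's general $y\in\R^d$) is harmless by homogeneity, and your explanation of why the sharp exponent is $2t+1$ rather than $2t$ matches the paper's bookkeeping exactly.
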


\begin{proof}
By Lemma~\ref{lem:weighted-sph-moment}, it suffices to show that for every $y\in\R^d$ and every integer $0\le \ell\le r$,
\begin{equation}\label{eq:moment-identities}
\sum_{z\in Z} w_z\,\langle z,y\rangle^{\ell}
=
\begin{cases}
\Gamma\,\|y\|^{2m}\,\dfrac{(1/2)_m}{(d/2)_m} &\text{if } \ell=2m \text{ is even},\\[1.2ex]
0 & \text{if } \ell \text{ is odd}.
\end{cases}
\end{equation}
where
\[
\Gamma:=\sum_{z\in Z}w_z
=\sum_{V\in D}\omega_V\sum_{z\in Y_V}\lambda_{V,z}
=\Omega\Lambda.
\]

Fix $y\in\R^d$ and $0\le \ell\le r$.
By definition,
\[
\sum_{z\in Z} w_z\,\langle z,y\rangle^{\ell}
=\sum_{V\in D}\omega_V\sum_{z\in Y_V}\lambda_{V,z}\,\langle z,y\rangle^{\ell}.
\]
Since $z\in Y_V\subset V$, we have $P_Vz=z$, and hence
\begin{equation}\label{eq:basic-rewrite-fixed-en}
\sum_{z\in Z} w_z\,\langle z,y\rangle^{\ell}
=\sum_{V\in D}\omega_V\sum_{z\in Y_V}\lambda_{V,z}\,\langle z,P_Vy\rangle^{\ell}.
\end{equation}

\noindent\textbf{(A) The case where $\ell$ is odd.}
Since $\ell\le r\le s$, for each $V$ the function $x\mapsto \langle x,P_Vy\rangle^\ell$ on $S(V)$ is odd, and therefore
\[
\int_{S(V)}\langle x,P_Vy\rangle^\ell\,d\sigma_V(x)=0.
\]
Moreover, since $(Y_V,\{\lambda_{V,z}\})$ is a weighted spherical $s$-design and $\ell\le s$, we have
\[
\sum_{z\in Y_V}\lambda_{V,z}\,\langle z,P_Vy\rangle^\ell
=\Lambda\int_{S(V)}\langle x,P_Vy\rangle^\ell\,d\sigma_V(x)=0.
\]
Substituting this into \eqref{eq:basic-rewrite-fixed-en} yields
$\sum_{z\in Z} w_z\,\langle z,y\rangle^\ell=0$.

\noindent\textbf{(B) The case where $\ell=2m$ is even.}
Since $2m\le r\le s$, we have $2m\le s$, and since $2m\le r\le 2t+1$, we have $m\le t$.
For each $V\in D$, since $(Y_V,\{\lambda_{V,z}\})$ is a weighted spherical $s$-design on $S(V)\simeq S^{k-1}$
and $2m\le s$, applying Lemma~\ref{lem:weighted-sph-moment} on $S(V)$ (identified with $S^{k-1}$) with $d=k$ and $y=P_Vy$ gives

\[
\sum_{z\in Y_V}\lambda_{V,z}\,\langle z,P_Vy\rangle^{2m}
=\Lambda\,\|P_Vy\|^{2m}\,\frac{(1/2)_m}{(k/2)_m}.
\]
Therefore,
\[
\sum_{z\in Z} w_z\,\langle z,y\rangle^{2m}
=\Lambda\,\frac{(1/2)_m}{(k/2)_m}\sum_{V\in D}\omega_V\,\|P_Vy\|^{2m}.
\]
Let $P_y$ denote the orthogonal projection onto the line $\R y$. Since $\|P_Vy\|^2=\|y\|^2\,\tr(P_VP_y)$, we obtain
\[
\|P_Vy\|^{2m}=\|y\|^{2m}\bigl(\tr(P_VP_y)\bigr)^m.
\]
Hence
\[
\sum_{z\in Z} w_z\,\langle z,y\rangle^{2m}
=\Lambda\,\frac{(1/2)_m}{(k/2)_m}\,\|y\|^{2m}
\sum_{V\in D}\omega_V\,\bigl(\tr(P_VP_y)\bigr)^m.
\]
By Lemma~\ref{lem:trace_moment_and_pol1_en}, the function $V\mapsto(\tr(P_VP_y))^m$ lies in $\Pol^{(1)}_{2t}(G_{k,d})$,
so \eqref{eq:BE-cubature} yields

\[
\sum_{V\in D}\omega_V\,\bigl(\tr(P_VP_y)\bigr)^m
=\Omega\int_{G_{k,d}}\bigl(\tr(P_VP_y)\bigr)^m\,d\sigma_{k,d}(V)
=\Omega\,\frac{(k/2)_m}{(d/2)_m}.
\]
Substituting this into the previous identity gives
\[
\sum_{z\in Z} w_z\,\langle z,y\rangle^{2m}
=\Omega\Lambda\,\|y\|^{2m}\,\frac{(1/2)_m}{(d/2)_m}
=\Gamma\,\|y\|^{2m}\,\frac{(1/2)_m}{(d/2)_m}.
\]
Combining {\rm(A)} and {\rm(B)}, \eqref{eq:moment-identities} holds for all $0\le \ell\le r$.
Therefore $(Z,w)$ is a weighted spherical $r$-design.
\end{proof}

\begin{corollary}\label{cor:lifting}
Let $t,s\in\N$.
Assume that $D\subset G_{k,d}$ is an equal-weight tight $t$-fusion frame, and that for each $V\in D$,
$Y_V\subset S(V)$ is a spherical $s$-design, with $|Y_V|$ independent of $V$. 
Let
\[
Z:=\bigcup_{V\in D} Y_V\subset S^{d-1}
\]
be viewed as a multiset.
Then the multiset $Z$ is a spherical $\min\{s,\,2t+1\}$-design on $S^{d-1}$.
\end{corollary}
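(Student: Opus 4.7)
The plan is to derive Corollary~\ref{cor:lifting} directly from Theorem~\ref{thm:lifting-weighted} by specializing all weights to $1$ and then translating the resulting weighted design into the multiset language. First, since $D$ is an \emph{equal-weight} tight $t$-fusion frame (see the last sentence of Definition~\ref{def:p-fusion-frame}), I will set $\omega_V=1$ for every $V\in D$, giving $\Omega=|D|$. Second, by Remark~\ref{rem:weighted-to-equalweight}, each equal-weight spherical $s$-design $Y_V$ may be regarded as a weighted spherical $s$-design by assigning $\lambda_{V,z}=1$ for every $z\in Y_V$; with this choice $\Lambda=\sum_{z\in Y_V}\lambda_{V,z}=|Y_V|$, which is independent of $V$ by the hypothesis on $|Y_V|$. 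All the assumptions of Theorem~\ref{thm:lifting-weighted} are therefore satisfied.

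Applying that theorem produces a weighted spherical $r$-design $(Z,w)$ on $S^{d-1}$ with $r=\min\{s,2t+1\}$, where for each point $z\in Z$ the weight is
\[
w(z)=\sum_{V\in D\,:\,z\in Y_V}\omega_V\,\lambda_{V,z}
     =\#\{V\in D:z\in Y_V\}.
\]
In other words, $w(z)$ is exactly the multiplicity with which $z$ appears in the multiset union $\bigcup_{V\in D}Y_V$.

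The final step is the dictionary between integer-weighted designs and multiset designs: summing a polynomial $f$ of total degree at most $r$ over all occurrences in the multiset equals $\sum_{z\in Z}w(z)\,f(z)$, so the weighted cubature identity of Definition~\ref{def:weighted-sph-design} becomes precisely the equal-weight cubature identity of Definition~\ref{def:spherical-design-cubature} for the multiset. Hence $Z$, viewed as a multiset, is a spherical $r$-design. The substantive work is inherited from Theorem~\ref{thm:lifting-weighted}, and no genuine obstacle is expected; the only thing left to verify is this bookkeeping translation from integer weights to multiplicities.
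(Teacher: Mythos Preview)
Your proposal is correct and is exactly the intended derivation: the paper states Corollary~\ref{cor:lifting} without a separate proof, treating it as the immediate equal-weight specialization of Theorem~\ref{thm:lifting-weighted}. The only content you add beyond the theorem is the observation that the resulting weight $w(z)=\#\{V\in D:z\in Y_V\}$ is precisely the multiplicity of $z$ in the multiset union, which is the correct way to read the multiset formulation.
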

\begin{remark}\label{rem:disjointness-short-en}
Corollary~\ref{cor:lifting} gives a spherical $r$-design in the \emph{multiset} sense.
If, in addition, $Y_V\cap Y_{V'}=\varnothing$ for $V\neq V'$, then
$Z=\bigsqcup_{V\in D}Y_V$ and we obtain an ordinary spherical design. If $k\ge2$, we may retake $Y_V$'s to make this additional condition satisfied, as follows. Fix $V\in D$ and suppose $Y_V \cap Y_{V'} \neq \varnothing$ for some $V' \in D$. 
Define 
\[
\mathcal B
=
\bigcup_{V'\in D\setminus\{V\}}\bigl\{\, g\in O(V)\ \bigm|\ (gY_V)\cap Y_{V'}\neq\varnothing\,\bigr\}.
\]
Then $\mathcal B$ is a finite union of cosets of subgroups conjugate to $O(k-1)$ in $O(V)$.
Each such coset is a closed subset of $O(V)$ with empty interior since $k\ge2$, hence its complement is open dense.
Therefore $O(V)\setminus\mathcal B$ is a finite intersection of open dense subsets of $O(V)$, and in particular is non-empty.
Then choosing $g\in O(V)\setminus\mathcal B$ yields
$(gY_V)\cap Y_{V'}=\varnothing$ for all $V'\neq V$. Since $gY_V\subset S(V)$ is again a spherical $s$-design, we may replace $Y_V$ with $g Y_V$. Repeating this procedure for each $V \in D$, we obtain a pairwise disjoint family $\{Y_V\}_{V \in D}$.
\end{remark}

To conclude this section, we note that the construction of spherical $(2t+1)$-designs using Corollary~\ref{cor:lifting} consists of two independent tasks:
\begin{enumerate}
\item constructing spherical $(2t+1)$-designs on the lower-dimensional sphere $S^{k-1}$, and
\item constructing tight $t$-fusion frames on $G_{k,d}$.
\end{enumerate}
\section{Constructing Tight $2$-Fusion Frames on $G_{2,d}$ via Group Orbits}
\label{sec:Bd-orbit-G2d}

In this section we construct equal-weight tight $2$-fusion frames on $G_{2,d}$ from orbits of the hyperoctahedral group
$B_d$. These Grassmannian configurations can then be used as inputs for the lifting result
(Corollary~\ref{cor:lifting}) to produce spherical designs.
Throughout, binomial coefficients are interpreted by
\[
\binom{n}{k}=0\qquad (k<0\ \text{or}\ k>n).
\]Let $d\ge 4$, and let $e_1,\dots,e_d$ denote the standard basis of $\R^d$.
The hyperoctahedral group
\[
B_d:=\{\pm1\}^d\rtimes S_d \subset O(d)
\]
acts on $\R^d$ by signed coordinate permutations, and hence acts on $G_{2,d}$ by $g\cdot W:=gW$.
For a finite set $X\subset G_{2,d}$, define
\[
F_X(x):=\frac1{|X|}\sum_{W\in X}\|P_Wx\|^4 \qquad (x\in\R^d),
\]
where $P_W$ denotes the orthogonal projection onto $W$.

\begin{proposition}
\label{prop:tff2-iff-const}
For a finite set $X\subset G_{2,d}$, the following are equivalent:
\begin{enumerate}
\item[\rm (i)]
$X$ is a $\TFF_2$.
\item[\rm (ii)]
$F_X$ is constant on the unit sphere $S^{d-1}$.
\end{enumerate}
\end{proposition}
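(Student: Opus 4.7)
The plan is to reduce everything to a homogeneity argument once we identify $|X|\,F_X$ as a homogeneous quartic polynomial on $\R^d$.

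First I would unpack the definitions in the equal-weight case. By Definition~\ref{def:p-fusion-frame} with $t=2$ and $\omega_j\equiv 1$, being a $\TFF_2$ means that there is a constant $A>0$ with
\[
\sum_{W\in X}\|P_W x\|^{4}=A\,\|x\|^{4}\qquad (x\in\R^d),
\]
which upon dividing by $|X|$ is $F_X(x)=(A/|X|)\|x\|^{4}$ on all of $\R^d$.

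Next I would observe that for any $W\in G_{2,d}$ the map $x\mapsto \|P_W x\|^2 = x^{\mathsf T}P_W x$ is a homogeneous quadratic polynomial in the coordinates of $x$, so $\|P_W x\|^4$ is a homogeneous quartic polynomial. Summing and dividing by $|X|$, $F_X$ is itself a homogeneous polynomial of degree $4$.

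The implication (i)$\Rightarrow$(ii) is then immediate: if $F_X(x)=(A/|X|)\|x\|^4$ on $\R^d$, then $F_X\equiv A/|X|$ on $S^{d-1}$. For (ii)$\Rightarrow$(i), suppose $F_X\equiv c$ on $S^{d-1}$. For any nonzero $x\in\R^d$, homogeneity gives
\[
F_X(x)=\|x\|^{4}\,F_X\!\left(\frac{x}{\|x\|}\right)=c\,\|x\|^{4},
\]
and this identity also holds at $x=0$. Multiplying by $|X|$ recovers $\sum_{W\in X}\|P_W x\|^4=|X|c\,\|x\|^4$, so $X$ is a $\TFF_2$ with $A=B=|X|c$.

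There is no real obstacle here; the only thing worth flagging is that the argument relies on the explicit fact that $\|P_W x\|^2$ is a quadratic form in $x$, which is what turns the pointwise condition on the sphere into a global identity on $\R^d$. No appeal to Theorem~\ref{thm:BE-cubature} or to zonal polynomials is needed for this equivalence.
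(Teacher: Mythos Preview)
Your proof is correct and follows essentially the same route as the paper's: both use the degree-$4$ homogeneity of $F_X$ to pass from constancy on $S^{d-1}$ to the identity $F_X(x)=c\|x\|^4$ on $\R^d$. The only addition you make is the explicit remark that $\|P_Wx\|^2$ is a quadratic form, which the paper leaves implicit when invoking ``$4$-homogeneity of $F_X$''.
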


\begin{proof}
The implication (i)$\Rightarrow$(ii) is immediate.
For (ii)$\Rightarrow$(i), let $x\neq 0$ and set $u:=x/\|x\|\in S^{d-1}$.
By $4$-homogeneity of $F_X$,
\[
F_X(x)=F_X(\|x\|u)=\|x\|^4F_X(u).
\]
By (ii), $F_X(u)$ is a constant $C$ on the sphere, hence $F_X(x)=C\|x\|^4$ for all $x\neq 0$.
The case $x=0$ is trivial.
\end{proof}

In what follows, we consider criteria for a $B_d$-invariant subset $X\subset G_{2,d}$ to be a $\TFF_2$.

\begin{lemma}
\label{lem:bd-inv-form}
If $X\subset G_{2,d}$ is $B_d$-invariant, then $F_X$ is $B_d$-invariant, i.e.,
$F_X(gx)=F_X(x)$ for all $g\in B_d$ and $x\in \R^d$.
Moreover, there exist constants $\alpha,\beta\in\R$ such that
\[
F_X(x)=\alpha\sum_{i=1}^d x_i^4+\beta\Bigl(\sum_{i=1}^d x_i^2\Bigr)^2
\qquad(\forall x\in\R^d).
\]
\end{lemma}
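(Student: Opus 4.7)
The plan splits into two parts: first, to show that $B_d$-invariance of $X$ forces $B_d$-invariance of $F_X$, and second, to identify the space of $B_d$-invariant homogeneous polynomials of degree $4$ on $\R^d$ as two-dimensional, spanned by $\sum_i x_i^4$ and $\bigl(\sum_i x_i^2\bigr)^2$.

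For the first part, I would use that $B_d\subset O(d)$, so for every $g\in B_d$ and every subspace $W\in G_{2,d}$ the projections transform as $P_{gW}=gP_Wg^{-1}$. Given $h\in B_d$, I reindex the sum defining $F_X(hx)$ via $W':=h^{-1}W$; by the assumed $B_d$-invariance of $X$, this substitution permutes $X$. Using $\|P_{hW'}(hx)\|=\|hP_{W'}x\|=\|P_{W'}x\|$, one obtains
\[
F_X(hx)=\frac1{|X|}\sum_{W\in X}\|P_W(hx)\|^4=\frac1{|X|}\sum_{W'\in X}\|P_{hW'}(hx)\|^4=F_X(x).
\]

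For the second part, I first note that $F_X$ is a homogeneous polynomial of degree $4$: each $\|P_Wx\|^2=x^{\mathsf T}P_Wx$ is a quadratic form in $x$, so $\|P_Wx\|^4$ is a polynomial of degree $4$, and the average over $W$ preserves this. The sign-flipping subgroup $\{\pm 1\}^d\subset B_d$ forces every monomial appearing in $F_X$ to have all exponents even; in total degree $4$ the only such monomials are $x_i^4$ and $x_i^2x_j^2$ ($i\ne j$). Invariance under the remaining $S_d$-action means $F_X$ must be a linear combination of
\[
\sum_{i=1}^d x_i^4 \qquad\text{and}\qquad \sum_{1\le i<j\le d} x_i^2 x_j^2.
\]
Since $\bigl(\sum_i x_i^2\bigr)^2=\sum_i x_i^4+2\sum_{i<j}x_i^2x_j^2$, this two-dimensional space is also spanned by $\sum_i x_i^4$ and $\bigl(\sum_i x_i^2\bigr)^2$, yielding the claimed expression with appropriate $\alpha,\beta\in\R$.

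I do not anticipate a serious obstacle; the only delicate point is the bookkeeping in the change of summation variable $W\mapsto h^{-1}W$, since $x$ and the index set of the sum are being transformed simultaneously. Everything else reduces to elementary invariant theory of the hyperoctahedral group acting on degree-$4$ homogeneous polynomials.
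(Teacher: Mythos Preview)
Your proposal is correct and follows essentially the same approach as the paper: the paper also uses $P_{gW}=gP_Wg^{\mathsf T}$ together with $gX=X$ to get $B_d$-invariance of $F_X$, and then argues that a $B_d$-invariant degree-$4$ polynomial must lie in the span of $\sum_i x_i^4$ and $\bigl(\sum_i x_i^2\bigr)^2$. The only cosmetic difference is that the paper substitutes $y_i=x_i^2$ and invokes the basis $\{\sum_i y_i^2,\ (\sum_i y_i)^2\}$ of symmetric degree-$2$ forms, whereas you enumerate the even monomials $x_i^4$, $x_i^2x_j^2$ directly---these are the same argument in different notation.
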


\begin{proof}
From $gX=X$ and the orthogonality identity $P_{gW}=gP_Wg^{\mathsf T}$
(hence $\|P_{gW}x\|=\|P_W(g^{\mathsf T}x)\|$), we obtain
\[
F_X(g^{\mathsf T}x)=F_X(x)\qquad(\forall g\in B_d),
\]
so in particular $F_X$ is invariant under coordinate permutations and sign changes.

Sign-invariance implies that $F_X$ can be written as a polynomial in $y_i:=x_i^2$.
Since $F_X$ is homogeneous of degree $4$ in $x$, it is homogeneous of degree $2$ in $y$.
Moreover, permutation-invariance implies that $F_X$ is a symmetric homogeneous polynomial of degree $2$
in $y_1,\dots,y_d$.
The space of symmetric homogeneous polynomials of degree $2$ is spanned by
\[
\sum_{i=1}^d y_i^2,\qquad \Bigl(\sum_{i=1}^d y_i\Bigr)^2,
\]
which yields the claimed form.
\end{proof}
By Lemma~\ref{lem:bd-inv-form}, on the sphere we may write
$F_X=\alpha\sum_i x_i^4+\beta$.
Hence constancy on $S^{d-1}$ is equivalent to $\alpha=0$, and it suffices to compare $F_X$ at two points
where $\sum_i x_i^4$ takes different values.

\begin{proposition}
\label{prop:two-point-test}
Assume that $X\subset G_{2,d}$ is $B_d$-invariant. Then the following are equivalent:
\begin{enumerate}
\item[\rm (i)] $X$ is a $\TFF_2$.
\item[\rm (ii)]
\[
F_X(e_1)=F_X\Bigl(\frac{e_1+e_2}{\sqrt2}\Bigr).
\]
\end{enumerate}
\end{proposition}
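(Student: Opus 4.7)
The plan is to exploit the explicit form of $F_X$ given by Lemma~\ref{lem:bd-inv-form} so that constancy on $S^{d-1}$ reduces to a single scalar condition, which can be tested at any two points where $\sum_i x_i^4$ assumes distinct values. Since we are given that $X$ is $B_d$-invariant, Lemma~\ref{lem:bd-inv-form} supplies constants $\alpha,\beta\in\R$ with
\[
F_X(x)=\alpha\sum_{i=1}^d x_i^4+\beta\Bigl(\sum_{i=1}^d x_i^2\Bigr)^2.
\]
On $S^{d-1}$ the second sum equals $1$, so $F_X|_{S^{d-1}}(x)=\alpha\sum_{i=1}^d x_i^4+\beta$.

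By Proposition~\ref{prop:tff2-iff-const}, the property that $X$ is a $\TFF_2$ is equivalent to the constancy of $F_X$ on $S^{d-1}$. Because the function $x\mapsto \sum_{i=1}^d x_i^4$ is non-constant on $S^{d-1}$ (as $d\ge 4$ ensures there is more than one coordinate to redistribute mass between), this constancy is in turn equivalent to $\alpha=0$. This reduces the proposition to showing that the two-point equality in (ii) is exactly the condition $\alpha=0$.

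For the final step I would simply evaluate $F_X$ at the two chosen points. At $x=e_1$ we have $\sum_i x_i^4=1$ and $\sum_i x_i^2=1$, so $F_X(e_1)=\alpha+\beta$. At $x=(e_1+e_2)/\sqrt{2}$ we have $\sum_i x_i^4 = 2\cdot(1/\sqrt{2})^4 = 1/2$ and $\sum_i x_i^2=1$, so
\[
F_X\!\Bigl(\tfrac{e_1+e_2}{\sqrt{2}}\Bigr)=\tfrac{\alpha}{2}+\beta.
\]
The two values agree if and only if $\alpha/2=0$, i.e.\ $\alpha=0$, establishing the equivalence (i)$\Leftrightarrow$(ii).

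There is no real obstacle here; the entire content of the proposition has already been concentrated in Lemma~\ref{lem:bd-inv-form}, which cut the space of $B_d$-invariant quartic forms down to two dimensions. The only substantive check is confirming that the two specified vectors separate the two basis forms $\sum_i x_i^4$ and $(\sum_i x_i^2)^2$ on $S^{d-1}$, which is just the elementary computation above. If anything were to go wrong it would be a degenerate low-dimensional case, but the hypothesis $d\ge 4$ (and in fact $d\ge 2$) already guarantees that $e_1$ and $(e_1+e_2)/\sqrt{2}$ are valid, distinct points of $S^{d-1}$ at which $\sum_i x_i^4$ differs.
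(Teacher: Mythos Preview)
Your proof is correct and follows essentially the same route as the paper: invoke Lemma~\ref{lem:bd-inv-form} to write $F_X|_{S^{d-1}}=\alpha\sum_i x_i^4+\beta$, reduce the $\TFF_2$ condition to $\alpha=0$ via Proposition~\ref{prop:tff2-iff-const}, and then check that the two specified unit vectors give $F_X$-values $\alpha+\beta$ and $\alpha/2+\beta$, whose equality is exactly $\alpha=0$. The paper phrases the last step as computing the difference $F_X(e_1)-F_X((e_1+e_2)/\sqrt2)=\alpha/2$, but this is the same computation.
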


\begin{proof}
By Lemma~\ref{lem:bd-inv-form},
\[
F_X(x)=\alpha\sum_i x_i^4+\beta\Bigl(\sum_i x_i^2\Bigr)^2.
\]
On the sphere, $(\sum_i x_i^2)^2=1$, hence
$F_X|_{S^{d-1}}=\alpha\sum_i x_i^4+\beta$.
Thus $F_X$ is constant on $S^{d-1}$ if and only if $\alpha=0$.

On the other hand,
\[
\sum_i (e_{1}^4)_i=1,\qquad
\sum_i\Bigl(\frac{e_1+e_2}{\sqrt2}\Bigr)_i^4=\frac12,
\]
and therefore
\[
F_X(e_1)-F_X\Bigl(\frac{e_1+e_2}{\sqrt2}\Bigr)
=\alpha\Bigl(1-\frac12\Bigr)=\frac{\alpha}{2}.
\]
Hence (ii) $\iff \alpha=0 \iff$ (i).
\end{proof}

Next we define explicit $B_d$-invariant sets as $B_d$-orbits.
Let $a$ and $b$ be integers with $1\le a\le b$, $a+b\le d$, and write $[d]:=\{1,\dots,d\}.$
Define
\[
u_0:=\frac1{\sqrt a}\sum_{i=1}^a e_i,\qquad
v_0:=\frac1{\sqrt b}\sum_{j=a+1}^{a+b} e_j,\qquad
W_0:=\mathrm{span}(u_0,v_0)\in G_{2,d}.
\]

\[
\mathcal O^{(d)}_{a,b}
:=B_d\cdot W_0\subset G_{2,d},
\]

\[
N_d(a,b):=\bigl|\mathcal O^{(d)}_{a,b}\bigr|.
\]

To compute $N_d(a,b)$ and, later, the quantity $F_{\mathcal O}(x)$,
we introduce the following set which will be used to parametrize elements of $\mathcal O^{(d)}_{a,b}$ by supports and signs:
\[
\mathcal{T}_{a,b}^{(d)}
:=
\Bigl\{(A,B,\varepsilon,\delta)\ \Big|\ 
A,B\subset[d],\ A\cap B=\varnothing,\ |A|=a,\ |B|=b,\ 
\varepsilon\in\{\pm1\}^{A},\ \delta\in\{\pm1\}^{B}
\Bigr\},
\]
where $\{\pm1\}^{A}$ denotes the set of all functions $A\to\{\pm1\}$.

For \(\tau=(A,B,\varepsilon,\delta)\in\mathcal{T}_{a,b}^{(d)}\), set
\[
u_{A,\varepsilon}:=\frac{1}{\sqrt{a}}\sum_{i\in A}\varepsilon_i\, e_i,\qquad
v_{B,\delta}:=\frac{1}{\sqrt{b}}\sum_{j\in B}\delta_j\, e_j,
\]
and define
\[
W_{\tau}=W_{A,B;\varepsilon,\delta}:=\mathrm{span}\bigl(u_{A,\varepsilon},\,v_{B,\delta}\bigr)\in G_{2,d}.
\]


\begin{lemma}\label{lem:Wtau_in_orbit}
The map defined by
\[
\pi:\mathcal{T}_{a,b}^{(d)}\longrightarrow \mathcal{O}_{a,b}^{(d)},
\qquad
\pi(\tau):=W_{\tau}.
\]
is surjective.
\end{lemma}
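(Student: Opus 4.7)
The plan is to unwind the definitions: any element of $\mathcal{O}^{(d)}_{a,b}=B_d\cdot W_0$ is obtained by applying a signed permutation to $W_0=\mathrm{span}(u_0,v_0)$, so I only need to read off $(A,B,\varepsilon,\delta)$ from the signed permutation.

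First, I fix $W\in\mathcal{O}^{(d)}_{a,b}$ and choose $g\in B_d$ with $W=g\cdot W_0$. Writing $g=(\epsilon,\sigma)$ with $\epsilon\in\{\pm1\}^d$ and $\sigma\in S_d$, so that $g\cdot e_i=\epsilon_i\, e_{\sigma(i)}$, I set
\[
A:=\sigma(\{1,\dots,a\}),\qquad B:=\sigma(\{a+1,\dots,a+b\}).
\]
Because $\{1,\dots,a\}$ and $\{a+1,\dots,a+b\}$ are disjoint subsets of $[d]$ and $\sigma$ is a bijection, I get $|A|=a$, $|B|=b$, and $A\cap B=\varnothing$, so the support condition defining $\mathcal{T}^{(d)}_{a,b}$ holds.

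Next, I transport the signs through $\sigma$: define $\varepsilon\in\{\pm1\}^A$ by $\varepsilon_{\sigma(i)}:=\epsilon_i$ for $i\in\{1,\dots,a\}$, and similarly $\delta\in\{\pm1\}^B$ by $\delta_{\sigma(j)}:=\epsilon_j$ for $j\in\{a+1,\dots,a+b\}$. A direct computation then gives
\[
g\cdot u_0=\frac{1}{\sqrt a}\sum_{i=1}^a \epsilon_i\, e_{\sigma(i)}=\frac{1}{\sqrt a}\sum_{i\in A}\varepsilon_i\, e_i=u_{A,\varepsilon},
\]
and analogously $g\cdot v_0=v_{B,\delta}$. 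Hence, with $\tau:=(A,B,\varepsilon,\delta)\in\mathcal{T}^{(d)}_{a,b}$,
\[
\pi(\tau)=\mathrm{span}(u_{A,\varepsilon},v_{B,\delta})=g\cdot\mathrm{span}(u_0,v_0)=g\cdot W_0=W,
\]
which proves surjectivity.

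There is essentially no obstacle here; the only things to check carefully are that $A\cap B=\varnothing$ (which is automatic from $\sigma$ being a permutation) and that the sign data $\varepsilon,\delta$ are well-defined as functions on $A$ and $B$ (which follows because $\sigma$ is injective, so each element of $A$ comes from a unique preimage in $\{1,\dots,a\}$, and likewise for $B$). Note that $\pi$ is in general far from injective—e.g., flipping all signs in $\varepsilon$ preserves $\mathrm{span}(u_{A,\varepsilon},v_{B,\delta})$—but injectivity is not claimed.
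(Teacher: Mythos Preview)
Your surjectivity argument is correct and matches the paper's: pick $g\in B_d$ with $W=gW_0$, read off $A,B$ as the images of $\{1,\dots,a\}$ and $\{a+1,\dots,a+b\}$ under the permutation part, and transport the signs to obtain $\tau$ with $W_\tau=W$. The only difference in bookkeeping is that the paper writes $g=\mathrm{diag}(s)\sigma$ and sets $\varepsilon_i=s_i$ for $i\in A$ (signs indexed by target), whereas you index signs by source and reindex via $\varepsilon_{\sigma(i)}=\epsilon_i$; both are fine.

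One omission: the paper also checks that $\pi$ is \emph{well-defined}, i.e., that $W_\tau\in\mathcal{O}^{(d)}_{a,b}$ for every $\tau\in\mathcal{T}^{(d)}_{a,b}$, since the stated codomain is the orbit and not all of $G_{2,d}$. This is the same construction run in reverse (given $(A,B,\varepsilon,\delta)$, build $g\in B_d$ with $gW_0=W_\tau$), so it is trivial to add, but as written you have only shown that the image of $\pi$ contains $\mathcal{O}^{(d)}_{a,b}$, not that it is contained in it.
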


\begin{proof}
We first show that \(\pi\) is well-defined, i.e., \(W_{\tau}\in \mathcal O^{(d)}_{a,b}\) for every
\(\tau=(A,B,\varepsilon,\delta)\in\mathcal T^{(d)}_{a,b}\).
Choose a permutation \(\sigma\in S_d\) such that
\(\sigma(\{1,\dots,a\})=A\) and \(\sigma(\{a+1,\dots,a+b\})=B\).
Let \(s\in\{\pm1\}^d\) be a sign vector such that \(s_i=\varepsilon_i\) for \(i\in A\),
\(s_j=\delta_j\) for \(j\in B\), and \(s_\ell=1\) otherwise, and set \(g:=\mathrm{diag}(s)\,\sigma\in B_d\).
Then
\[
g u_0=\frac1{\sqrt a}\sum_{i\in A}\varepsilon_i e_i = u_{A,\varepsilon},
\qquad
g v_0=\frac1{\sqrt b}\sum_{j\in B}\delta_j e_j = v_{B,\delta},
\]
and hence \(gW_0=\mathrm{span}(g u_0,g v_0)=\mathrm{span}(u_{A,\varepsilon},v_{B,\delta})=W_{\tau}\).
Therefore \(W_{\tau}\in B_d\cdot W_0=\mathcal O^{(d)}_{a,b}\), so \(\pi\) is well-defined.

To show that $\pi$ is surjective, let \(W\in\mathcal O^{(d)}_{a,b}\). Then \(W=gW_0\) for some \(g\in B_d\).
Write \(g=\mathrm{diag}(s)\,\sigma\) with \(s\in\{\pm1\}^d\) and \(\sigma\in S_d\), and put
\[
A:=\sigma(\{1,\dots,a\}),\qquad B:=\sigma(\{a+1,\dots,a+b\}),
\qquad
\varepsilon_i:=s_i\ (i\in A),\quad \delta_j:=s_j\ (j\in B).
\]
Then \(\tau:=(A,B,\varepsilon,\delta)\in\mathcal T^{(d)}_{a,b}\) and the same computation gives
\(g u_0=u_{A,\varepsilon}\) and \(g v_0=v_{B,\delta}\), hence
\(W=gW_0=W_{\tau}=\pi(\tau)\).
Thus, \(\pi\) is surjective.
\end{proof}

For $x=(x_1,\dots,x_d)\in\R^d$, define its support by
\[
\supp(x):=\{\,i\in[d]:x_i\neq 0\,\}.
\]

\begin{proposition}\label{prop:inverse}
Let $W \in \mathcal O^{(d)}_{a,b}$. Then
\[
|\pi^{-1}(W)|=
\begin{cases}
4,&\text{if } a<b,\\
8,&\text{if } a=b.
\end{cases}
\]
\end{proposition}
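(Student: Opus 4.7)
The plan is to fix a reference preimage $\tau=(A,B,\varepsilon,\delta)\in\pi^{-1}(W)$, whose existence is guaranteed by Lemma~\ref{lem:Wtau_in_orbit}, and then classify all preimages by identifying exactly which vectors of the special shapes $u_{A',\varepsilon'}$ and $v_{B',\delta'}$ lie in $W=\mathrm{span}(u_{A,\varepsilon},v_{B,\delta})$. Every $x\in W$ admits a unique decomposition $x=\alpha u_{A,\varepsilon}+\beta v_{B,\delta}$, and since $A\cap B=\varnothing$ the nonzero entries of $x$ split into two blocks on $A$ and $B$ with values $\alpha\varepsilon_i/\sqrt a$ and $\beta\delta_j/\sqrt b$ respectively; in particular $\supp(x)\in\{A,B,A\cup B\}$ depending on which of $\alpha,\beta$ vanish.

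First I would characterize the ``shaped'' vectors in $W$. A vector of the form $u_{A',\varepsilon'}$ (resp.\ $v_{B',\delta'}$) is a unit vector with exactly $a$ (resp.\ $b$) nonzero entries, all of absolute value $1/\sqrt a$ (resp.\ $1/\sqrt b$). Comparing with the decomposition above, three subcases arise: (a) $\beta=0$ and $|\alpha|=1$, giving $x=\pm u_{A,\varepsilon}$ with $\supp(x)=A$; (b) $\alpha=0$ and $|\beta|=1$, giving $x=\pm v_{B,\delta}$ with $\supp(x)=B$; (c) $\alpha\beta\neq 0$, in which case $\supp(x)=A\cup B$ has size $a+b>\max(a,b)$, so $x$ cannot have either shape. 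Hence the only candidates for $u_{A',\varepsilon'}$ or $v_{B',\delta'}$ in $W$ are the four vectors $\pm u_{A,\varepsilon},\pm v_{B,\delta}$.

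Next I would count preimages. When $a<b$, the support sizes $a$ and $b$ distinguish $u$-shaped from $v$-shaped vectors, so $u_{A',\varepsilon'}\in\{\pm u_{A,\varepsilon}\}$ forces $A'=A$ and $\varepsilon'=\pm\varepsilon$, and $v_{B',\delta'}\in\{\pm v_{B,\delta}\}$ forces $B'=B$ and $\delta'=\pm\delta$; the disjointness $A'\cap B'=\varnothing$ is automatic, yielding $|\pi^{-1}(W)|=2\cdot 2=4$. When $a=b$, the two shapes coincide and each of $u_{A',\varepsilon'},v_{B',\delta'}$ may be chosen freely among the four candidates; the constraint $A'\cap B'=\varnothing$ forces the unordered pair $\{A',B'\}=\{A,B\}$, so either $(A',B')=(A,B)$ or $(A',B')=(B,A)$, each admitting $2\cdot 2=4$ independent sign choices, for a total of $|\pi^{-1}(W)|=8$.

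The main obstacle is the shape classification, specifically ruling out the mixed subcase (c); this rests on the elementary observation $a+b>\max(a,b)$ (valid for $a,b\ge 1$). After that the counting is bookkeeping, with a small care needed in the $a=b$ case to confirm that the ``swapped'' preimages $(A',B')=(B,A)$ are genuinely distinct parametrizations of the same $W$ and that disjointness is preserved.
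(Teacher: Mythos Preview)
Your proof is correct and follows essentially the same approach as the paper: both arguments fix a preimage $(A,B,\varepsilon,\delta)$, use the disjointness $A\cap B=\varnothing$ to show that any nonzero $x\in W$ has $\supp(x)\in\{A,B,A\cup B\}$, and from this deduce that any other preimage must have $(A',B')=(A,B)$ (or possibly $(B,A)$ when $a=b$), with signs determined up to a global $\pm$ on each factor. Your framing in terms of classifying the ``shaped'' unit vectors in $W$ is a slight repackaging of the paper's step-by-step determination of first $(A',B')$ and then $(\varepsilon',\delta')$, but the underlying idea and the key support-size observation $a+b>\max(a,b)$ are identical.
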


\begin{proof}
Fix $W\in\mathcal O^{(d)}_{a,b}$ and, by Lemma~\ref{lem:Wtau_in_orbit}, there exists $\tau=(A,B,\varepsilon,\delta)\in\mathcal T^{(d)}_{a,b}$ such that $W=W_{\tau}$. Write $u:=u_{A,\varepsilon}$, $v:=v_{B,\delta}$.
Since $A\cap B=\varnothing$, we have $\supp(u)=A$, $\supp(v)=B$, and $u\perp v$.
For any $x=\alpha u+\beta v\in W\setminus\{0\}$,
\[
\supp(x)=
\begin{cases}
A,&\text{if } \beta=0,\\
B,&\text{if } \alpha=0,\\
A\cup B,&\text{if } \alpha\beta\neq 0,
\end{cases}
\qquad\text{hence}\qquad
|\supp(x)|\in\{a,b,a+b\}.
\tag{$\ast$}\label{eq:supp-pattern-en}
\]
In particular, the vectors in $W$ satisfying $|\supp(x)|=a$ lie on the line $\R u$,
and their support is always $A$ (and similarly for $|\supp(x)|=b$).

Now assume $\tau'=(A',B',\varepsilon',\delta')$ satisfies $W_{(\tau')}=W$.
If $a<b$, then the line in $U$ spanned by vectors with support size $a$ is unique, so
$\R u_{A,\varepsilon}=\R u_{A',\varepsilon'}$, hence $A=A'$.
Similarly $B=B'$.
If $a=b$, the same argument shows $\{A,B\}=\{A',B'\}$, leaving only the possibility of swapping $A$ and $B$.

Thus, when $a<b$, the pair $(A,B)$ determining $W$ is unique, whereas when $a=b$ there are exactly two choices,
namely $(A,B)$ and $(B,A)$.
Hence it remains to count the identifications coming from signs for fixed $(A,B)$.

Fix $(A,B)$ and consider
\[
W_{A,B;\varepsilon,\delta}=W_{A,B;\varepsilon',\delta'}
\quad\Longleftrightarrow\quad
\varepsilon'=\pm\varepsilon\ \text{and}\ \delta'=\pm\delta.
\label{eq:sign-identification-en}
\]
We prove ($\Rightarrow$). If $W_{A,B;\varepsilon',\delta'}=W_{A,B;\varepsilon,\delta}$, then
$u_{A,\varepsilon'}\in \Span\{u_{A,\varepsilon},v_{B,\delta}\}$, so for some $\alpha,\beta\in\R$ we have
$u_{A,\varepsilon'}=\alpha u_{A,\varepsilon}+\beta v_{B,\delta}$.
The left-hand side is supported on $A$ and $v_{B,\delta}$ is supported on $B$, so $A\cap B=\varnothing$ forces $\beta=0$.
Thus $u_{A,\varepsilon'}=\alpha u_{A,\varepsilon}$.
Since both are unit vectors, $|\alpha|=1$, hence $\alpha=\pm1$, which is equivalent to $\varepsilon'=\pm\varepsilon$.
Similarly $\delta'=\pm\delta$ follows. The converse ($\Leftarrow$) is immediate.

Therefore, for fixed $(A,B)$ there are exactly $4$ sign classes.
Hence $|\pi^{-1}(W)|=4$ when $a<b$.
When $a=b$, we can swap $(A,B)$, hence $|\pi^{-1}(W)|=8$.
\end{proof}

\begin{lemma}\label{lem:disjoint-and-size}
Let $d\in\N$, and let $1\le a\le b$ with $a+b\le d$, and consider the $B_d$-orbit
\(
\mathcal{O}^{(d)}_{a,b}\subset G_{2,d}.
\)
\begin{enumerate}
\item[\rm (i)]
Let $1\le a'\le b'$ with $a'+b'\le d$.
If $\{a,b\}\neq\{a',b'\}$, then
\[
\mathcal{O}^{(d)}_{a,b}\cap \mathcal{O}^{(d)}_{a',b'}=\varnothing.
\]
\item[\rm (ii)]
The orbit size is given by
\[
N_d(a,b)
=\binom{d}{a}\binom{d-a}{b}\,2^{a+b-2}\times
\begin{cases}
1,&\text{if } a<b,\\[1mm]
\frac{1}{2},&\text{if } a=b.
\end{cases}
\]
\end{enumerate}
\end{lemma}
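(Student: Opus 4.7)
The plan is to leverage the parametrization $\pi : \mathcal T^{(d)}_{a,b} \twoheadrightarrow \mathcal O^{(d)}_{a,b}$ from Lemma~\ref{lem:Wtau_in_orbit} together with the fiber count from Proposition~\ref{prop:inverse}. Part (ii) then reduces to counting $\mathcal T^{(d)}_{a,b}$ and dividing by the fiber size, while part (i) requires identifying a $B_d$-invariant of $W$ from which the unordered pair $\{a,b\}$ can be recovered.

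For part (i), I would exploit the fact that signed coordinate permutations preserve support cardinalities. Define, for each $W \in G_{2,d}$,
\[
\mathcal S(W) := \{\, |\supp(x)| : x \in W \setminus \{0\}\,\} \subset \N.
\]
Since $\supp(gx) = \sigma(\supp(x))$ for $g = \mathrm{diag}(s)\sigma \in B_d$, one has $\mathcal S(gW) = \mathcal S(W)$, so $\mathcal S$ is constant along every $B_d$-orbit. Using the support pattern \eqref{eq:supp-pattern-en} from the proof of Proposition~\ref{prop:inverse}, for $W \in \mathcal O^{(d)}_{a,b}$ one obtains $\mathcal S(W) = \{a, b, a+b\}$ when $a < b$, and $\mathcal S(W) = \{a, 2a\}$ when $a = b$. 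In either case, $\min \mathcal S(W) = \min(a,b)$ and $\max \mathcal S(W) = a+b$, and these two quantities jointly recover the unordered pair $\{a,b\}$. Hence orbits indexed by distinct unordered pairs are necessarily disjoint.

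For part (ii), the cardinality of $\mathcal T^{(d)}_{a,b}$ is computed by choosing $A \subset [d]$ with $|A|=a$ in $\binom{d}{a}$ ways, then $B \subset [d]\setminus A$ with $|B|=b$ in $\binom{d-a}{b}$ ways, and finally independent sign vectors $\varepsilon \in \{\pm 1\}^A$ and $\delta \in \{\pm 1\}^B$, giving
\[
|\mathcal T^{(d)}_{a,b}| = \binom{d}{a}\binom{d-a}{b}\, 2^{a+b}.
\]
Since $\pi$ is surjective by Lemma~\ref{lem:Wtau_in_orbit} and every fiber has the same size $4$ or $8$ by Proposition~\ref{prop:inverse}, dividing yields
\[
N_d(a,b) = \frac{|\mathcal T^{(d)}_{a,b}|}{|\pi^{-1}(W)|}
= \binom{d}{a}\binom{d-a}{b}\, 2^{a+b-2}
\cdot
\begin{cases}
1, & a < b,\\
\tfrac{1}{2}, & a = b,
\end{cases}
\]
as claimed.

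The only genuine obstacle is pinning down the invariant in (i); once one realizes that $B_d$ preserves individual support sizes, the extremes of $\mathcal S(W)$ immediately reveal $\{a,b\}$. After that, (ii) is pure bookkeeping on top of the fiber structure of $\pi$ already established in the preceding results.
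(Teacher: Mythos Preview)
Your proposal is correct and follows essentially the same approach as the paper: both arguments recover the unordered pair $\{a,b\}$ from the set of support sizes of nonzero vectors in $W$, and both obtain the orbit count by dividing $|\mathcal T^{(d)}_{a,b}|$ by the constant fiber size from Proposition~\ref{prop:inverse}. The only cosmetic difference is that you extract $\{a,b\}$ via $\min\mathcal S(W)=a$ and $\max\mathcal S(W)=a+b$, whereas the paper defines two successive minima $m_1(W)=a$ and $m_2(W)=b$; your version is marginally cleaner but not substantively different.
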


\begin{proof}
(i)
Assume $W\in \mathcal O^{(d)}_{a,b}$. By Lemma~\ref{lem:Wtau_in_orbit}, there exists
$\tau=(A,B,\varepsilon,\delta)\in\mathcal T^{(d)}_{a,b}$ such that
\[
W=\pi(\tau)=W_{A,B;\varepsilon,\delta}=\Span\{u_{A,\varepsilon},v_{B,\delta}\}.
\]
Put $u:=u_{A,\varepsilon}$ and $v:=v_{B,\delta}$. Then $\supp(u)=A$, $\supp(v)=B$, $A\cap B=\varnothing$,
and $|A|=a$, $|B|=b$. Hence for any $x=\alpha u+\beta v\neq 0$,
\[
|\supp(x)|=
\begin{cases}
a,&\beta=0,\\
b,&\alpha=0,\\
a+b,&\alpha\beta\neq 0.
\end{cases}
\]

Define
\[
m_1(W):=\min\{|\supp(x)|:x\in W\setminus\{0\}\},
\]
and choose $x_1\in W\setminus\{0\}$ attaining $m_1(W)$. Then $m_1(W)=a$.
Set
\[
m_2(W):=\min\{|\supp(x)|:x\in W\setminus(\R x_1)\}.
\]
Note that $m_2(W)$ is independent of the choice of $x_1$:
if $a<b$, then $|\supp(x)|=a$ forces $\beta=0$, so $x_1\in\R u$ and hence $m_2(W)=b$;
if $a=b$, then $m_1(W)=m_2(W)=a$ for any such $x_1$.
Therefore the unordered pair $\{a,b\}$ is determined by $W$.

Consequently, if $W$ belonged to both $\mathcal O^{(d)}_{a,b}$ and $\mathcal O^{(d)}_{a',b'}$, then we would have
$\{a,b\}=\{a',b'\}$. Thus, if $\{a,b\}\neq\{a',b'\}$, the two orbits are disjoint.

(ii)
We have
\[
|\mathcal T_{a,b}^{(d)}|
=\binom{d}{a}\binom{d-a}{b}\cdot 2^a\cdot 2^b
=\binom{d}{a}\binom{d-a}{b}\,2^{a+b}.
\]
On the other hand, Proposition~\ref{prop:inverse} shows that $|\pi^{-1}(W)|$ is independent of $W\in\mathcal O^{(d)}_{a,b}$.
Hence
\[
N_d(a,b)=|\mathcal O^{(d)}_{a,b}|
=\frac{|\mathcal T_{a,b}^{(d)}|}{|\pi^{-1}(W)|}
=\binom{d}{a}\binom{d-a}{b}\,2^{a+b}\times
\begin{cases}
\frac14,&\text{if } a<b,\\[1mm]
\frac18,&\text{if } a=b,
\end{cases}
\]
which simplifies to the stated formula.
\end{proof}

By Proposition~\ref{prop:two-point-test}, a $B_d$-invariant set $X$ is a $\TFF_2$ if and only if $F_X$
takes the same value at two points. We measure this discrepancy by the following quantity.

For an orbit $X \subset G_{2,d}$, define
\[
\Delta(X):=
F_{X}(e_1)
-
F_{X}\Bigl(\frac{e_1+e_2}{\sqrt2}\Bigr).
\]
\begin{theorem}
\label{thm:orbit-union}
Let $(a_t,b_t)\in\mathbb N^2$ ($t=1,\dots,m$) be pairwise distinct pairs satisfying $1\le a_t\le b_t$ and $a_t+b_t\le d$, and set
\[
X:=\bigsqcup_{t=1}^m \mathcal O^{(d)}_{a_t,b_t}\subset G_{2,d}.
\]
Then the following are equivalent:
\begin{enumerate}
\item[\rm (i)] $X$ is a $\TFF_2$.
\item[\rm (ii)]
\[
\sum_{t=1}^m N_d(a_t,b_t)\,\Delta\!\bigl(\mathcal O^{(d)}_{a_t,b_t}\bigr)=0.
\]
\end{enumerate}
\end{theorem}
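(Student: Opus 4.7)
The plan is to reduce the statement to a single linear identity by applying the two-point test of Proposition~\ref{prop:two-point-test} and then using the fact that $F_X$ depends linearly (up to normalization) on the underlying multiset.

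First, I would observe that $X$, as a union of $B_d$-orbits, is itself $B_d$-invariant, so Proposition~\ref{prop:two-point-test} applies and tells us that $X$ is a $\TFF_2$ if and only if $\Delta(X)=0$, where I extend the definition of $\Delta$ to an arbitrary $B_d$-invariant finite set $Y\subset G_{2,d}$ by $\Delta(Y):=F_Y(e_1)-F_Y\bigl((e_1+e_2)/\sqrt2\bigr)$. Next, by Lemma~\ref{lem:disjoint-and-size}(i), the orbits $\mathcal O^{(d)}_{a_t,b_t}$ are pairwise disjoint since the pairs $(a_t,b_t)$ are pairwise distinct (and ordered with $a_t\le b_t$, so $\{a_t,b_t\}\ne\{a_{t'},b_{t'}\}$ for $t\ne t'$). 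Consequently
\[
|X|=\sum_{t=1}^m N_d(a_t,b_t).
\]

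Now I would compute $F_X$ by splitting the sum over $W\in X$ along the orbits:
\[
F_X(x)=\frac{1}{|X|}\sum_{t=1}^m\sum_{W\in \mathcal O^{(d)}_{a_t,b_t}}\|P_Wx\|^4
=\frac{1}{|X|}\sum_{t=1}^m N_d(a_t,b_t)\,F_{\mathcal O^{(d)}_{a_t,b_t}}(x).
\]
Since $\Delta$ is the linear functional $f\mapsto f(e_1)-f\bigl((e_1+e_2)/\sqrt2\bigr)$ applied to $F_{(\cdot)}$, it follows that
\[
|X|\,\Delta(X)=\sum_{t=1}^m N_d(a_t,b_t)\,\Delta\!\bigl(\mathcal O^{(d)}_{a_t,b_t}\bigr).
\]
Because $|X|>0$, the condition $\Delta(X)=0$ is equivalent to the vanishing of the right-hand side, which together with the first step yields the equivalence of (i) and (ii).

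There is no real obstacle here: the content of the theorem is the additivity of $F_X$ under disjoint unions combined with the two-point criterion, and the only subtle ingredient—disjointness of distinct orbits—has already been established in Lemma~\ref{lem:disjoint-and-size}(i). The exact values $N_d(a_t,b_t)$ from Lemma~\ref{lem:disjoint-and-size}(ii) are not needed for the equivalence itself; they will only enter when one applies the theorem to concrete pairs $(a_t,b_t)$.
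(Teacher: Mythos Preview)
Your proof is correct and follows essentially the same approach as the paper: both use the $B_d$-invariance of the union to reduce via Proposition~\ref{prop:two-point-test} to $\Delta(X)=0$, decompose $F_X$ as the weighted average of the orbit functions $F_{\mathcal O^{(d)}_{a_t,b_t}}$, and observe that the positive normalizing factor $|X|=\sum_t N_d(a_t,b_t)$ makes the two vanishing conditions equivalent. Your explicit invocation of Lemma~\ref{lem:disjoint-and-size}(i) to justify disjointness is a minor elaboration the paper leaves implicit (the statement already uses $\bigsqcup$), but otherwise the arguments coincide.
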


\begin{proof}
Each \(\mathcal O^{(d)}_{a_t,b_t}\) is a \(B_d\)-orbit and hence $B_d$-invariant, so their disjoint union
\(X=\bigsqcup_{t=1}^m \mathcal O^{(d)}_{a_t,b_t}\) is also $B_d$-invariant.
Therefore, by Proposition~\ref{prop:two-point-test},
\begin{equation}\label{eq:two-point-en}
X\ \text{is a $\TFF_2$}
\quad\Longleftrightarrow\quad
F_X(e_1)=F_X\Bigl(\frac{e_1+e_2}{\sqrt2}\Bigr).
\end{equation}

On the other hand, $|X|=\sum_{t=1}^m N_d(a_t,b_t)$, and
\[
F_X(x)
=\frac1{|X|}\sum_{W\in X}\|P_Wx\|^4
=\frac1{\sum_{t=1}^m N_d(a_t,b_t)}
\sum_{t=1}^m N_d(a_t,b_t)\,F_{\mathcal O^{(d)}_{a_t,b_t}}(x).
\]
Applying this to $x=e_1$ and $x=(e_1+e_2)/\sqrt2$ and taking the difference yields
\[
\Delta(X)
=\frac1{\sum_{t=1}^m N_d(a_t,b_t)}
\sum_{t=1}^m N_d(a_t,b_t)\,\Delta\!\bigl(\mathcal O^{(d)}_{a_t,b_t}\bigr).
\]
Since the denominator is positive, combining this with \eqref{eq:two-point-en} gives the claim.
\end{proof}

In particular, for a single orbit $X=\mathcal O^{(d)}_{a,b}$, the $\TFF_2$ condition is equivalent to $\Delta(X)=0$.
Thus, it suffices to compute $\Delta(\mathcal O^{(d)}_{a,b})$ explicitly.

\begin{lemma}
\label{lem:orbit-two-values}
Let $a,b,d\in\N$ with $a+b\le d$. Then
\begin{equation}\label{eq:Fx_e1}
F_{\mathcal O^{(d)}_{a,b}}(e_1)=\frac{a+b}{ab\,d}.
\end{equation}
\begin{equation}\label{eq:Fx_e12}
F_{\mathcal O^{(d)}_{a,b}}\Bigl(\frac{e_1+e_2}{\sqrt2}\Bigr)
=\frac{8ab+(d-4)(a+b)}{2ab\,d(d-1)}.
\end{equation}
Moreover,
\begin{equation}\label{eq:Delta_orbit}
\Delta\bigl(\mathcal O^{(d)}_{a,b}\bigr)
=
\frac{(d+2)(a+b)-8ab}{2ab\,d(d-1)}.
\end{equation}
\end{lemma}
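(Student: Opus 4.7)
The plan is to parametrize $\mathcal O^{(d)}_{a,b}$ via the surjection $\pi:\mathcal T^{(d)}_{a,b}\to \mathcal O^{(d)}_{a,b}$ of Lemma~\ref{lem:Wtau_in_orbit}. By Proposition~\ref{prop:inverse}, $|\pi^{-1}(W)|$ is independent of $W\in\mathcal O^{(d)}_{a,b}$, so for any function $\Phi$ on $G_{2,d}$,
\[
\frac{1}{|\mathcal O^{(d)}_{a,b}|}\sum_{W\in \mathcal O^{(d)}_{a,b}}\Phi(W)
=
\frac{1}{|\mathcal T^{(d)}_{a,b}|}\sum_{\tau\in \mathcal T^{(d)}_{a,b}}\Phi(W_\tau).
\]
Since $A\cap B=\varnothing$, the vectors $u_{A,\varepsilon},v_{B,\delta}$ are orthonormal and span $W_\tau$, so
\[
\|P_{W_\tau}x\|^4
=\bigl(\langle x,u_{A,\varepsilon}\rangle^2+\langle x,v_{B,\delta}\rangle^2\bigr)^2.
\]
Treating $\tau$ as uniform on $\mathcal T^{(d)}_{a,b}$, I will compute the expectation by first averaging over the signs $\varepsilon,\delta$ (which are independent of each other and of $(A,B)$) and then over the disjoint subset pair $(A,B)$.

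For \eqref{eq:Fx_e1} with $x=e_1$, I note that $\langle e_1,u_{A,\varepsilon}\rangle^2=\tfrac{1}{a}\mathbf 1[1\in A]$ and $\langle e_1,v_{B,\delta}\rangle^2=\tfrac{1}{b}\mathbf 1[1\in B]$, independent of the signs, and the cross term from squaring their sum vanishes because $A\cap B=\varnothing$. Averaging the two indicators against $P(1\in A)=a/d$ and $P(1\in B)=b/d$ yields \eqref{eq:Fx_e1} at once.

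For \eqref{eq:Fx_e12} with $x=(e_1+e_2)/\sqrt 2$, expanding the square produces three terms $\langle x,u\rangle^4$, $2\langle x,u\rangle^2\langle x,v\rangle^2$, and $\langle x,v\rangle^4$; the middle term factors under sign-averaging by the independence of $\varepsilon$ and $\delta$. Sign-averaging of $(\varepsilon_1\mathbf 1[1\in A]+\varepsilon_2\mathbf 1[2\in A])^2$ kills the $\varepsilon_1\varepsilon_2$ cross term and leaves $\mathbf 1[1\in A]+\mathbf 1[2\in A]$; for the corresponding fourth power, the binomial $6\varepsilon_1^2\varepsilon_2^2$ contribution survives, producing an extra $6\cdot\mathbf 1[\{1,2\}\subset A]$ term. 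Averaging over $(A,B)$ then requires only the probabilities
\[
P(1\in A)=\tfrac{a}{d},\quad
P(\{1,2\}\subset A)=\tfrac{a(a-1)}{d(d-1)},\quad
P(1\in A,\,2\in B)=\tfrac{ab}{d(d-1)},
\]
together with their $A\leftrightarrow B$ analogues, all straightforward from uniform sampling of disjoint subset pairs. Collecting everything over the common denominator $2abd(d-1)$ yields \eqref{eq:Fx_e12}, and \eqref{eq:Delta_orbit} then follows by direct subtraction.

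The main bookkeeping challenge is the fourth-power sign-average: one must carefully split into the cases $\{1,2\}\subset A$, exactly one of $\{1,2\}$ in $A$, and neither, and correctly identify the surviving binomial terms in each case. No step is conceptually deep; the difficulty lies purely in careful combinatorial accounting.
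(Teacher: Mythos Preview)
Your proposal is correct and follows essentially the same approach as the paper: both pull back the orbit average to $\mathcal T^{(d)}_{a,b}$ via Proposition~\ref{prop:inverse}, expand $\|P_{W_\tau}x\|^4$ in the orthonormal basis $(u_{A,\varepsilon},v_{B,\delta})$, average first over signs and then over the subset pair $(A,B)$. The only difference is presentational: the paper carries out the second computation by an explicit five-case split (AA, BB, AB, AC, BC), whereas you package the same case analysis uniformly using indicator variables and the probabilities $P(1\in A)$, $P(\{1,2\}\subset A)$, $P(1\in A,2\in B)$, which is slightly slicker but not a different route.
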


\begin{proof}
Write \(\mathcal O:=\mathcal O^{(d)}_{a,b}\) and \(\mathcal T:=\mathcal T^{(d)}_{a,b}\).
By Proposition~\ref{prop:inverse}, for any \(x\in\mathbb R^d\) we have
\begin{equation}\label{eq:pullback-orbit-average-en}
F_{\mathcal O}(x)
=\frac1{|\mathcal O|}\sum_{W\in\mathcal O}\|P_Wx\|^4
=\frac1{|\mathcal T|}\sum_{\tau\in\mathcal T}\|P_{W_{\tau}}x\|^4.
\end{equation}
Moreover,
\begin{equation}\label{eq:size-T-en}
|\mathcal T|=\binom{d}{a}\binom{d-a}{b}\,2^{a+b}.
\end{equation}
We evaluate \eqref{eq:pullback-orbit-average-en} at the two vectors
\[
x_1:=e_1,
\qquad
x_2:=\frac{e_1+e_2}{\sqrt2}.
\]

Fix \(\tau=(A,B,\varepsilon,\delta)\in\mathcal T\), and write
\(u:=u_{A,\varepsilon}\), \(v:=v_{B,\delta}\), and \(W_{\tau}=\mathrm{span}(u,v)\).
Since \(A\cap B=\varnothing\), we have \(u\perp v\) and \(\|u\|=\|v\|=1\), hence
\begin{equation}\label{eq:proj-expansion-en}
\|P_{W_{\tau}}x\|^2=\langle x,u\rangle^2+\langle x,v\rangle^2,\qquad
\|P_{W_{\tau}}x\|^4
=\langle x,u\rangle^4+\langle x,v\rangle^4
+2\langle x,u\rangle^2\langle x,v\rangle^2.
\end{equation}

\noindent\textbf{1. Proof of \eqref{eq:Fx_e1}.}
We have
\[
\langle e_1,u\rangle=
\begin{cases}
\varepsilon_1/\sqrt a,&\text{if } 1\in A,\\
0,&\text{if } 1\notin A,
\end{cases}
\qquad
\langle e_1,v\rangle=
\begin{cases}
\delta_1/\sqrt b,&\text{if } 1\in B,\\
0,&\text{if } 1\notin B.
\end{cases}
\]
So
\[
\|P_{W_{\tau}}e_1\|^4=
\begin{cases}
1/a^2,&\text{if } 1\in A,\\
1/b^2,&\text{if } 1\in B,\\
0,&\text{if } 1\notin A\cup B.
\end{cases}
\]
Therefore,
\begin{align*}
\sum_{\tau\in\mathcal T}\|P_{W_{\tau}}e_1\|^4
&=\frac1{a^2}\,|\{\tau\in\mathcal T:1\in A\}|+\frac1{b^2}\,|\{\tau\in\mathcal T:1\in B\}|,\\
|\{\tau\in\mathcal T:1\in A\}|
&=\binom{d-1}{a-1}\binom{d-a}{b}\,2^{a+b},\\
|\{\tau\in\mathcal T:1\in B\}|
&=\binom{d-1}{b-1}\binom{d-b}{a}\,2^{a+b}.
\end{align*}
Combining this with \eqref{eq:pullback-orbit-average-en} and \eqref{eq:size-T-en} yields
\[
F_{\mathcal O}(e_1)
=\frac1{a^2}\cdot\frac{\binom{d-1}{a-1}}{\binom{d}{a}}
+\frac1{b^2}\cdot\frac{\binom{d-1}{b-1}}{\binom{d}{b}}
=\frac{a+b}{ab\,d}.
\]

\noindent\textbf{2. Proof of \eqref{eq:Fx_e12}.}

Fix \(\tau=(A,B,\varepsilon,\delta)\in\mathcal T\), and write
\(u:=u_{A,\varepsilon}\), \(v:=v_{B,\delta}\), and \(W_{\tau}=\mathrm{span}(u,v)\).
Set $x:=x_2$.

{(AA) The case $\{1,2\}\subset A$.}
Then $\langle x,v\rangle=0$ and
\(
\langle x,u\rangle=(\varepsilon_1+\varepsilon_2)/\sqrt{2a}.
\)
Hence
\[
\sum_{\varepsilon\in\{\pm1\}^{A}}(\varepsilon_1+\varepsilon_2)^4
=2^{a-2}\!\!\sum_{(\varepsilon_1,\varepsilon_2)\in\{\pm1\}^2}\!(\varepsilon_1+\varepsilon_2)^4
=32\cdot 2^{a-2}.
\]
Thus for fixed \((A,B)\),
\[
\sum_{\varepsilon}\sum_{\delta}\|P_{W_{\tau}}x\|^4
=2^b\cdot\frac{32\cdot 2^{a-2}}{4a^2}
=\frac{2^{a+b+1}}{a^2},
\]
and the number of such pairs \((A,B)\) is \(\binom{d-2}{a-2}\binom{d-a}{b}\).

{(BB) The case $\{1,2\}\subset B$.}
Similarly,
\[
\sum_{\varepsilon}\sum_{\delta}\|P_{W_{\tau}}x\|^4=\frac{2^{a+b+1}}{b^2},
\qquad
\#\{(A,B)\}=\binom{d-2}{a}\binom{d-a-2}{b-2}.
\]

{(AB) The case $\{1,2\}$ is split, one element in $A$ and one in $B$.}
For instance, if \(1\in A\) and \(2\in B\), then
\(
\langle x,u\rangle=\varepsilon_1/\sqrt{2a}
\)
and
\(
\langle x,v\rangle=\delta_2/\sqrt{2b}.
\)
By \eqref{eq:proj-expansion-en},
\[
\|P_{W_{\tau}}x\|^4=\Bigl(\frac{1}{2a}+\frac{1}{2b}\Bigr)^2
=\frac{1}{4a^2}+\frac{1}{4b^2}+\frac{1}{2ab},
\]
and hence
\[
\sum_{\varepsilon}\sum_{\delta}\|P_{W_{\tau}}x\|^4
=2^{a+b}\Bigl(\frac{1}{4a^2}+\frac{1}{4b^2}+\frac{1}{2ab}\Bigr).
\]
The number of such pairs \((A,B)\) equals
\(2\binom{d-2}{a-1}\binom{d-a-1}{b-1}\).

{(AC) The case one of $\{1,2\}$ lies in $A$ and the other lies in $[d]\setminus(A\cup B)$.}
For instance, if \(1\in A\) and \(2\notin A\cup B\), then
\(
\langle x,u\rangle=\varepsilon_1/\sqrt{2a}
\)
and \(\langle x,v\rangle=0\), so
\[
\sum_{\varepsilon}\sum_{\delta}\|P_{W_{\tau}}x\|^4
=2^{a+b}\cdot\frac{1}{4a^2},
\qquad
\#\{(A,B)\}=2\binom{d-2}{a-1}\binom{d-a-1}{b}.
\]

{(BC) The case one of $\{1,2\}$ lies in $B$ and the other lies in $[d]\setminus(A\cup B)$.}
Similarly,
\[
\sum_{\varepsilon}\sum_{\delta}\|P_{W_{\tau}}x\|^4
=2^{a+b}\cdot\frac{1}{4b^2},
\qquad
\#\{(A,B)\}=2\binom{d-2}{a}\binom{d-a-2}{b-1}.
\]
In the remaining case \(\{1,2\}\cap(A\cup B)=\varnothing\), we have \(\|P_{W_{\tau}}x\|^4=0\).

Substituting these contributions into \eqref{eq:pullback-orbit-average-en} and using \eqref{eq:size-T-en}, we obtain
\[
F_{\mathcal O}(x)
=\frac{8ab+(d-4)(a+b)}{2ab\,d(d-1)}.
\]

Finally, combining the results of 1 and 2 yields
\[
\Delta\bigl(\mathcal O^{(d)}_{a,b}\bigr)
=
\frac{a+b}{ab\,d}
-
\frac{8ab+(d-4)(a+b)}{2ab\,d(d-1)}
=
\frac{(d+2)(a+b)-8ab}{2ab\,d(d-1)}.
\]
\end{proof}

\begin{theorem}\label{thm:scaling}
Assume that $(d_0;a_0,b_0)\in\mathbb N^3$ satisfies
\[
(d_0+2)(a_0+b_0)=8a_0b_0,\qquad 1\le a_0\le b_0,\qquad a_0+b_0\le d_0.
\]
For any integer \(s\ge1\), set
\[
d:=(d_0+2)s-2,\qquad a:=a_0s,\qquad b:=b_0s.
\]
Then
\[
X:=\mathcal O^{(d)}_{a,b}\subset G_{2,d}
\]
is an equal-weight $\TFF_2$.
\end{theorem}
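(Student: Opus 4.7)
The plan is to reduce the statement to a single arithmetic identity and then verify it by direct substitution using the hypothesis on $(d_0,a_0,b_0)$. Applying Theorem~\ref{thm:orbit-union} to the trivial union $X = \mathcal{O}^{(d)}_{a,b}$ (the case $m=1$) yields the equivalence: $X$ is a $\TFF_2$ if and only if $\Delta(\mathcal{O}^{(d)}_{a,b}) = 0$. By the explicit formula \eqref{eq:Delta_orbit} of Lemma~\ref{lem:orbit-two-values}, this further reduces to the single numerical condition
\[
(d+2)(a+b) = 8ab.
\]

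Next, I would substitute $d = (d_0+2)s - 2$, $a = a_0 s$, $b = b_0 s$ and use the elementary scaling identities $d+2 = (d_0+2)s$, $a+b = (a_0+b_0)s$, and $ab = a_0 b_0 s^2$ to obtain
\[
(d+2)(a+b) = (d_0+2)(a_0+b_0)\,s^2, \qquad 8ab = 8 a_0 b_0\, s^2.
\]
The hypothesis $(d_0+2)(a_0+b_0) = 8 a_0 b_0$ then forces equality of these two quantities, giving $\Delta(\mathcal{O}^{(d)}_{a,b}) = 0$.

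Before invoking Lemma~\ref{lem:orbit-two-values} and Theorem~\ref{thm:orbit-union}, I would also verify that the scaled parameters fulfill the structural constraints $1 \le a \le b$ and $a + b \le d$. The first is immediate from $1 \le a_0 \le b_0$ and $s \ge 1$. For the second, $a + b = (a_0+b_0)s \le d_0 s \le (d_0+2)s - 2 = d$, where the last inequality uses $2s \ge 2$.

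There is really no substantive obstacle here, since all the analytic difficulty has already been absorbed into the proofs of Theorem~\ref{thm:orbit-union} and Lemma~\ref{lem:orbit-two-values}. Conceptually, the theorem amounts to the observation that the polynomial $(d+2)(a+b) - 8ab$ is homogeneous of degree $2$ under the scaling $(d+2,\,a,\,b) \mapsto s\cdot(d_0+2,\,a_0,\,b_0)$, so a single base-case solution to $(d_0+2)(a_0+b_0) = 8 a_0 b_0$ automatically propagates to an infinite family of equal-weight $\TFF_2$s in increasing dimensions.
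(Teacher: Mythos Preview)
Your proposal is correct and follows essentially the same argument as the paper: reduce via Theorem~\ref{thm:orbit-union} (with $m=1$) and Lemma~\ref{lem:orbit-two-values} to the identity $(d+2)(a+b)=8ab$, then verify it by the homogeneous scaling $d+2=(d_0+2)s$, $a+b=(a_0+b_0)s$, $ab=a_0b_0 s^2$, and check the constraint $a+b\le d$ from $a_0+b_0\le d_0$. If anything, your citation of Theorem~\ref{thm:orbit-union} for the equivalence $\TFF_2\Leftrightarrow\Delta=0$ is slightly more precise than the paper's reference to Lemma~\ref{lem:orbit-two-values} alone.
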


\begin{proof}
By Lemma~\ref{lem:orbit-two-values}, the single orbit $X=\mathcal O^{(d)}_{a,b}$ is a $\TFF_2$ if and only if
\[
\Delta(X)=\Delta\bigl(\mathcal O^{(d)}_{a,b}\bigr)=0
\quad\Longleftrightarrow\quad
(d+2)(a+b)=8ab.
\]
On the other hand,
\[
d+2=(d_0+2)s,\qquad a+b=(a_0+b_0)s,\qquad ab=a_0b_0\,s^2,
\]
and hence
\[
(d+2)(a+b)= (d_0+2)(a_0+b_0)\,s^2 = 8a_0b_0\,s^2 = 8ab.
\]
Thus $\Delta(X)=0$, proving the claim. Moreover,
\[
d-a-b = (d_0+2-a_0-b_0)s-2\ge 2s-2\ge 0
\]
follows from $a_0+b_0\le d_0$.
\end{proof}

\begin{remark}
\label{rem:minimal-two-families}
A sufficient condition for a single orbit \(X=\mathcal O^{(d)}_{a,b}\subset G_{2,d}\) to be a $\TFF_2$ is
\[
(d+2)(a+b)=8ab,\qquad 1\le a\le b,\qquad a+b\le d
\]
(cf.\ Lemma~\ref{lem:orbit-two-values} and Theorem~\ref{thm:orbit-union} with \(m=1\)).
We record two explicit infinite families satisfying this condition.

\textbf{(1)} The triple \((d_0;a_0,b_0)=(4;1,3)\) satisfies
\(
(4+2)(1+3)=8\cdot1\cdot3.
\)
For any integer \(s\ge1\), set
\[
d=6s-2,\qquad a=s,\qquad b=3s,\qquad d-a-b=2s-2\ge0.
\]
Then, by Theorem~\ref{thm:scaling},
\[
X_s:=\mathcal O^{(6s-2)}_{s,\,3s}\subset G_{2,\,6s-2}
\]
is a single-orbit $\TFF_2$. The first term is \((d;a,b)=(4;1,3)\).

\textbf{(2)} The triple \((d_0;a_0,b_0)=(13;3,5)\) satisfies
\(
(13+2)(3+5)=8\cdot3\cdot5.
\)
To obtain only odd dimensions, set \(s=2r+1\ (r\ge0)\), and define
\[
d=15(2r+1)-2=30r+13,\quad
a=3(2r+1)=6r+3,\quad
b=5(2r+1)=10r+5.
\]
Then, by Theorem~\ref{thm:scaling},
\[
Y_r:=\mathcal O^{(30r+13)}_{\,6r+3,\,10r+5}\subset G_{2,\,30r+13}
\]
is a single-orbit $\TFF_2$. The first term is \((d;a,b)=(13;3,5)\).

By Lemma~\ref{lem:disjoint-and-size}\,(ii), when \(a<b\) we have
\[
\bigl|\mathcal O^{(d)}_{a,b}\bigr|
= N_d(a,b)
=\binom{d}{a}\binom{d-a}{b}\,2^{a+b-2}.
\]
Hence the sizes of these families are
\[
|X_s|
= N_{6s-2}(s,3s)
=\binom{6s-2}{s}\binom{5s-2}{3s}\,2^{4s-2},
\]
\[
|Y_r|
= N_{30r+13}(6r+3,10r+5)
=\binom{30r+13}{6r+3}\binom{24r+10}{10r+5}\,2^{16r+6}.
\]
\end{remark}

The following list arises from a brute-force search: for each odd $d\le 1000$, we exhaustively searched over
all pairs $(a,b)$ with $1\le a\le b$ and $a+b\le d$, and tested whether there exist two pairs
\((a_1,b_1)\), \((a_2,b_2)\) satisfying
\[
N_d(a_1,b_1)\Delta(a_1,b_1)+N_d(a_2,b_2)\Delta(a_2,b_2)=0.
\]

\begin{remark}
\label{rem:two-orbit-search}
Computer search shows that, for the following odd dimensions \(d\le 1000\), there exists at least one solution
given by a union of two orbits:
\[
\begin{aligned}
&5,\,7,\,13,\,19,\,33,\,93,\,133,\,163,\,193,\,229,\,253,\,283,\,313,\,349,\,403,\,427,\,493,\,523,\,583,\,613,\\
&661,\,673,\,691,\,713,\,733,\,739,\,763,\,817,\,823,\,853,\,933,\,943,\,973.
\end{aligned}
\]
As small examples, we obtain
\[
d=5:\ (a,b)=(1,1)\ \sqcup\ (2,2),\qquad
d=7:\ (a,b)=(1,3)\ \sqcup\ (3,3).
\]
\end{remark}

\section{Non-existence problem for certain tight $2$-fusion frames on $G_{2,d}$}
\label{sec:NP-ECTFF2-G2d}

In this section, we focus primarily on \emph{equi-chordal tight $2$-fusion frames} ($\ECTFF_2$) on $G_{2,d}$,
and derive necessary conditions for their existence, including upper/lower bounds on the cardinality and the corresponding equality cases.
In the context of Grassmannian packings, Conway--Hardin--Sloane~\cite{CHS1996} introduced an isometric embedding into a sphere via projection matrices,
and obtained the simplex bound for equi-chordal (EC) configurations.
Using this embedding, we also summarize the relationship between design properties on the Grassmann manifold and those on the sphere.

For $V\in G_{k,d}$, let $P_V$ denote the orthogonal projection onto $V$.
Let $\Sym(d)$ denote the space of real symmetric $d\times d$ matrices, and set
\[
H_0:=\{A\in\Sym(d):\tr(A)=0\},\qquad
D_0:=\dim H_0=\binom{d+1}{2}-1.
\]
Equip $H_0$ with the Frobenius inner product
\[
\langle A,B\rangle_F:=\tr(AB)\qquad (A,B\in H_0).
\]
Define
\[
R_V:=\sqrt{\frac{d}{k(d-k)}}\Bigl(P_V-\frac{k}{d}I_d\Bigr)\in H_0,
\qquad
\Phi:G_{k,d}\to S^{D_0-1}\subset H_0,\quad V\mapsto R_V.
\]
\begin{theorem}[Conway--Hardin--Sloane {\cite[Theorem~5.1]{CHS1996}}]
With the above notation, $\Phi$ is an isometric embedding with respect to the chordal distance.
In particular, if a finite set $\mathcal{D}\subset G_{k,d}$ is equi-chordal, then
$\Phi(\mathcal{D})\subset S^{D_0-1}$ is an equidistant set on the sphere.
\end{theorem}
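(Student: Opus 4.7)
My plan is to verify, by direct computation, the three facts packaged in the statement: that $\Phi$ actually lands on the unit sphere of $H_0$, that $\Phi$ converts the chordal distance on $G_{k,d}$ into the Frobenius distance on $H_0$ up to a uniform positive constant, and that this immediately yields the equidistant corollary.

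First, I would verify $R_V\in H_0\cap S^{D_0-1}$. Trace zero is immediate from $\tr(P_V)=k$. For the norm, expanding $(P_V-\tfrac{k}{d}I_d)^2$ with $P_V^2=P_V$ and $\tr(P_V)=k$ gives
\[
\tr\bigl((P_V-\tfrac{k}{d}I_d)^2\bigr)=k-\tfrac{k^2}{d}=\tfrac{k(d-k)}{d},
\]
and the prefactor $\sqrt{d/(k(d-k))}$ is chosen precisely to normalise $R_V$ to unit Frobenius norm.

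Second, I would relate $\|R_V-R_W\|_F$ to $d_C(V,W)$. Since $R_V-R_W=\sqrt{d/(k(d-k))}\,(P_V-P_W)$, and $\tr(P_V^2)=\tr(P_W^2)=k$, one has
\[
\|R_V-R_W\|_F^2=\tfrac{d}{k(d-k)}\bigl(2k-2\tr(P_VP_W)\bigr).
\]
The key geometric input is the identity
\[
\tr(P_VP_W)=\sum_{i=1}^k\cos^2\theta_i(V,W),
\]
which I would obtain by writing $P_V=Q_VQ_V^{\mathsf T}$ and $P_W=Q_WQ_W^{\mathsf T}$, using cyclicity of the trace, and recognising
\[
\tr(P_VP_W)=\tr\bigl((Q_V^{\mathsf T}Q_W)^{\mathsf T}(Q_V^{\mathsf T}Q_W)\bigr)=\|Q_V^{\mathsf T}Q_W\|_F^2=\sum_{i=1}^k\sigma_i^2;
\]
then Definition~\ref{def:principal-angles} converts $\sigma_i$ to $\cos\theta_i(V,W)$. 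Substituting and using $\sin^2\theta_i=1-\cos^2\theta_i$ yields
\[
\|R_V-R_W\|_F^2=\tfrac{2d}{k(d-k)}\sum_{i=1}^k\sin^2\theta_i(V,W)=\tfrac{2d}{k(d-k)}\,d_C(V,W)^2.
\]
Thus $\Phi$ sends the chordal distance to the Frobenius distance up to the fixed positive constant $\sqrt{2d/(k(d-k))}$, i.e.\ it is an isometric embedding in the scaled sense.

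Third, the corollary is then purely formal: since the proportionality factor does not depend on the pair $(V,W)$, any equi-chordal configuration $\mathcal D\subset G_{k,d}$ is sent to a set $\Phi(\mathcal D)\subset S^{D_0-1}$ on which all pairwise Frobenius distances coincide, i.e.\ an equidistant set on the sphere. I do not expect a serious obstacle: the only step with any content is the identity $\tr(P_VP_W)=\sum_i\cos^2\theta_i$, and even this is a one-line consequence of the singular-value characterisation of principal angles, while all remaining manipulations are routine bookkeeping.
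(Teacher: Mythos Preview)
Your proposal is correct. The paper does not give its own proof of this statement; it simply cites Conway--Hardin--Sloane \cite[Theorem~5.1]{CHS1996}. What you have written is exactly the standard verification: the normalisation check $\|R_V\|_F=1$, the identity $\tr(P_VP_W)=\sum_i\cos^2\theta_i(V,W)$ via $P_V=Q_VQ_V^{\mathsf T}$ and cyclicity of the trace, and the resulting relation $\|R_V-R_W\|_F^2=\tfrac{2d}{k(d-k)}\,d_C(V,W)^2$. The only remark worth adding is that ``isometric embedding'' in the paper's phrasing should be read as isometric up to the fixed scale factor you computed (which is indeed how Conway--Hardin--Sloane state it, and is consistent with the constant $\tfrac{k(d-k)}{d}$ appearing in Corollary~\ref{cor:simplex}); injectivity of $\Phi$ is implicit in your distance formula since $d_C(V,W)=0$ forces $V=W$.
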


\begin{corollary}[Conway--Hardin--Sloane {\cite[Corollary~5.2]{CHS1996}}]\label{cor:simplex}
For $\{V_i\}_{i=1}^N \subset G_{k,d}$, the minimal chordal distance $d_C$ satisfies
\[
d_C^{\,2}\ \le\ \frac{k(d-k)}{d}\cdot\frac{N}{N-1}.
\]
Moreover, equality requires $N\le D_0+1$, and equality holds if and only if
$\Phi(V_i)\in H_0\simeq\R^{D_0}$ satisfy
\[
\sum_{i=1}^N \Phi(V_i)=0,\qquad
\langle \Phi(V_i),\Phi(V_j)\rangle_F=-\frac{1}{N-1}\ (i\ne j),
\]
i.e.\ $\{\Phi(V_i)\}$ forms a regular simplex in $S^{D_0-1}$.
\end{corollary}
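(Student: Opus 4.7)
The plan is to combine the (scaled) isometric embedding $\Phi$ from the preceding theorem with the classical simplex bound on the unit sphere, applied to the images $r_i:=\Phi(V_i)\in S^{D_0-1}\subset H_0$. First I would record the precise proportionality between chordal distance on $G_{k,d}$ and Frobenius distance in $H_0$: expanding $\|P_V-P_W\|_F^{\,2}=2k-2\tr(P_VP_W)$ and using $\tr(P_VP_W)=\sum_\ell\cos^2\theta_\ell(V,W)=k-d_C(V,W)^{2}$ gives
\[
\|\Phi(V_i)-\Phi(V_j)\|_F^{\,2}
=\frac{2d}{k(d-k)}\,d_C(V_i,V_j)^{2},
\]
and in particular $\|r_i\|_F=1$ for each $i$.

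Next I apply the standard Rankin-type argument to $r_1,\dots,r_N\in S^{D_0-1}$. From
\[
0\le\Bigl\|\sum_{i=1}^N r_i\Bigr\|_F^{\,2}=N+\sum_{i\ne j}\langle r_i,r_j\rangle_F
\]
the average of $\langle r_i,r_j\rangle_F$ over ordered pairs $i\ne j$ is at least $-1/(N-1)$, so some pair realises $\langle r_i,r_j\rangle_F\ge -1/(N-1)$, hence
\[
\min_{i\ne j}\|r_i-r_j\|_F^{\,2}\le 2+\frac{2}{N-1}=\frac{2N}{N-1}.
\]
Substituting the scaling from the previous display yields the desired inequality $d_C^{\,2}\le \tfrac{k(d-k)}{d}\cdot\tfrac{N}{N-1}$.

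For the equality case I would trace both inequalities back: equality forces $\sum_i r_i=0$ together with $\langle r_i,r_j\rangle_F=-1/(N-1)$ for every $i\ne j$. The Gram matrix of $\{r_i\}$ is then $\tfrac{N}{N-1}I_N-\tfrac{1}{N-1}J_N$ (with $J_N$ the all-ones $N\times N$ matrix), which has rank exactly $N-1$; since the $r_i$ lie in the $D_0$-dimensional ambient space $H_0$, we must have $N-1\le D_0$. Conversely, any configuration realising these two conditions is a regular simplex in $S^{D_0-1}$ attaining the bound.

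The proof is essentially textbook, so the main obstacle is bookkeeping rather than any deep idea: one must (i) be careful with the scale factor $\tfrac{2d}{k(d-k)}$ between Frobenius and chordal distances (without it the final bound would be off by a multiplicative constant), and (ii) carry out the small rank computation that converts the equality constraints on Gram entries into the cardinality bound $N\le D_0+1$.
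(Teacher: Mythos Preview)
Your proof is correct and complete. Note, however, that the paper does not actually prove this corollary: it is simply quoted from \cite[Corollary~5.2]{CHS1996}, so there is no ``paper's own proof'' to compare against. Your argument---transporting the problem to $S^{D_0-1}$ via $\Phi$, applying the Rankin simplex bound from $\bigl\|\sum_i r_i\bigr\|_F^{2}\ge 0$, and reading off the equality case from the Gram matrix $\tfrac{N}{N-1}I_N-\tfrac{1}{N-1}J_N$ of rank $N-1$---is exactly the standard proof given in the original Conway--Hardin--Sloane paper, with the bookkeeping of the scale factor $\tfrac{2d}{k(d-k)}$ carried out correctly.
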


In what follows, we show that $\Phi$ sends a Grassmann $4$-design to a spherical $2$-design.


\begin{lemma}\label{lem:GtoS}
Let $X \subset G_{k,d}$ be a Grassmann $4$-design.
Then $Y:=\Phi(X)$ is a spherical $2$-design on $S^{D_0-1}$.
\end{lemma}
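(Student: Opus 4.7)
The plan is to verify the moment characterization of spherical designs from Proposition~\ref{prop:sph-design-equiv}(iii) applied to $Y=\Phi(X)\subset S^{D_0-1}\subset H_0$, replacing the sums over $X$ by integrals over $G_{k,d}$ via the Grassmann $4$-design hypothesis. Writing $c:=\sqrt{d/(k(d-k))}$, the key reduction is that for any test vector $y\in H_0$ (which suffices since $\Phi(V)\in H_0$), one has $\langle\Phi(V),y\rangle_F = c\,\tr(P_Vy)$, because $\tr(y)=0$. Hence the moment functions $V\mapsto \langle\Phi(V),y\rangle_F^m$ for $m\in\{1,2\}$ are polynomials of degree $\le 2$ in the matrix entries of $P_V$, so they lie in $\Pol_4(G_{k,d})$; since $X$ is a Grassmann $4$-design, this lets me replace $\sum_{V\in X}$ by $|X|\int_{G_{k,d}}\,d\sigma_{k,d}$ in both moment sums.

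The degree-$1$ identity is immediate: $O(d)$-invariance gives $\int_{G_{k,d}} P_V\,d\sigma_{k,d}(V) = (k/d)I_d$, hence
\[
\sum_{V\in X}\tr(P_Vy) = |X|\,\tr\!\bigl((k/d)I_d\cdot y\bigr) = 0,
\]
which yields $\sum_{V\in X}\Phi(V)=0$. For the degree-$2$ identity I need $\int_{G_{k,d}}\langle\Phi(V),y\rangle_F^2\,d\sigma_{k,d}(V) = \|y\|_F^2/D_0$, which is exactly the value forced by Proposition~\ref{prop:sph-design-equiv}(iii) at $m=2$ with ambient dimension $D_0$. I would evaluate this integral via Schur's lemma applied to the operator
\[
T:H_0\to H_0,\qquad T(y):=\int_{G_{k,d}}\langle\Phi(V),y\rangle_F\,\Phi(V)\,d\sigma_{k,d}(V).
\]
A direct change of variables, using the covariance $\Phi(gV)=g\Phi(V)g^{-1}$, shows that $T$ is $O(d)$-equivariant, and since $H_0$ is irreducible as an $O(d)$-representation, $T=\alpha\,\mathrm{id}$ for some scalar $\alpha$. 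Taking the trace of $T$ and using $\|\Phi(V)\|_F^2=1$ pins down $\alpha D_0 = 1$, so $\alpha = 1/D_0$, and therefore $\int\langle\Phi(V),y\rangle_F^2\,d\sigma_{k,d}(V) = \langle T(y),y\rangle_F = \|y\|_F^2/D_0$, as required.

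The one structural ingredient that requires more than a direct computation is the claim that polynomials of degree $\le 2$ in the entries of $P_V$ belong to $\Pol_4(G_{k,d})$ in the decomposition-theoretic sense of Proposition~\ref{prop:Irr}; concretely, $\tr(P_Vy)\in\mathsf{H}^{k,d}_{\varnothing}\oplus\mathsf{H}^{k,d}_{(2)}$ and $(\tr(P_Vy))^2\in\bigoplus_{\deg(\mu)\le 2}\mathsf{H}^{k,d}_{2\mu}$. This is standard in the theory of Grassmannian harmonics (alternatively, one can expand $y$ as a linear combination of rank-one projections and invoke Lemma~\ref{lem:trace_moment_and_pol1_en} together with the multiplicative property $\Pol^{(1)}_{2}\cdot\Pol^{(1)}_{2}\subset\Pol_4$), but it is the one input on which the rest of the argument hinges and therefore deserves a careful reference.
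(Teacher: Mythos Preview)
Your proof is correct but follows a genuinely different route from the paper's. The paper works with the Gegenbauer characterization Proposition~\ref{prop:sph-design-equiv}(ii): it identifies $\langle R_V,R_W\rangle_F$ with the zonal polynomial $P_{(2)}(V,W)$, and then explicitly expands
\[
P_{(2)}(V,W)^2 \;=\; \frac{1}{D_0} + c_1 P_{(2)}(V,W) + c_2 P_{(4)}(V,W) + c_3 P_{(2,2)}(V,W)
\]
for suitable constants, so that the $4$-design hypothesis kills the double sums $\sum_{V,W} Q_\ell^{(D_0)}(\langle R_V,R_W\rangle_F)$ for $\ell=1,2$. You instead use the moment characterization Proposition~\ref{prop:sph-design-equiv}(iii), convert single sums to Grassmannian integrals via the $4$-design property, and evaluate those integrals by Schur's lemma on the irreducible $O(d)$-module $H_0$. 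Your argument is more conceptual and avoids the explicit zonal expansion entirely; the paper's is more self-contained given the formulas already recorded in Remark~\ref{rem:grass-zonal-low}. The one point you rightly flag---that degree-$\le 2$ polynomials in the entries of $P_V$ lie in $\Pol_4(G_{k,d})$---is exactly what the paper's explicit expansion of $P_{(2)}^2$ verifies at the zonal level, and in your setting it follows from the standard identification of $\Pol_{2t}(G_{k,d})$ with restrictions of degree-$\le t$ polynomials on $\Sym(d)$ (cf.~\cite{BCN2002}); your spectral-decomposition route via Lemma~\ref{lem:trace_moment_and_pol1_en} also works once one notes that $\tr(y)=0$ makes the constant terms cancel.
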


\begin{proof}
By Proposition~\ref{prop:sph-design-equiv}, it suffices to show that
\[
\sum_{V,W\in X} Q_{\ell}^{(D_0)}(\langle R_V,R_W\rangle_F)=0
\qquad (\ell=1,2).
\]
By the definition of $R_V$,
\[
\langle R_V,R_W\rangle_F=\frac{d}{k(d-k)}\left(\tr(P_VP_W)-\frac{k^2}{d}\right).
\]
Also $\tr(P_VP_W)=\tr(AA^{\mathsf T})=\sum_{i=1}^k\sigma_i^2=\sum_{i=1}^k y_i(V,W)$, so
$c_2=\frac1k\tr(P_VP_W)$.
Substituting this into $P_{(2)}=\frac{k}{k-d}-\frac{d}{k-d}c_2$ yields
\[
P_{(2)}(V,W)=\frac{d}{k(d-k)}\left(\tr(P_VP_W)-\frac{k^2}{d}\right)
=\langle R_V,R_W\rangle_F.
\]
Hence the case $\ell=1$ is exactly $\sum_{V,W\in X}P_{(2)}(V,W)=0$,
which follows from the assumption that $X$ is a Grassmann $4$-design (and thus a $2$-design).

For $\ell=2$, using
\[
Q_2^{(D_0)}(x)=\frac{D_0x^2-1}{D_0-1},
\]
we obtain
\[
\sum_{V,W\in X} Q_{2}^{(D_0)}(\langle R_V,R_W\rangle_F)
=\frac{D_0}{D_0-1}\sum_{V,W\in X} P_{(2)}(V,W)^2-\frac{|X|^2}{D_0-1}.
\]
Here we may expand
\[
\begin{aligned}
P_{(2)}(V,W)^2
&=
\frac{1}{D_0}
+\frac{4(d-2k)^2}{\,k(d-2)(d+4)(d-k)\,}\,P_{(2)}(V,W)\\[0.6ex]
&\quad
+\frac{d^2(k+2)(d-k+2)}{\,3k(d+2)(d+4)(d-k)\,}\,P_{(4)}(V,W)
+\frac{2d^2(k-1)(d-k-1)}{\,3k(d-2)(d-1)(d-k)\,}\,P_{(2,2)}(V,W).
\end{aligned}
\]
Since $X$ is a Grassmann $4$-design, we have
\[
\sum_{V,W\in X}P_{(2)}(V,W)=
\sum_{V,W\in X}P_{(4)}(V,W)=
\sum_{V,W\in X}P_{(2,2)}(V,W)=0,
\]
and hence
\[
\sum_{V,W\in X}P_{(2)}(V,W)^2=\frac{|X|^2}{D_0}.
\]
Substituting this shows that the $\ell=2$ sum also vanishes.
\end{proof}

The number $\binom{d+1}{2}$ is the absolute lower bound for Grassmann $4$-designs on $G_{k,d}$ (see e.g.\ \cite{BCN2002}).

\begin{definition}\cite{BBC2004}\label{def:tight4}
Let $d\ge4$.
A finite set $X\subset G_{k,d}$ is called a \emph{tight Grassmann $4$-design} if
$X$ is a Grassmann $4$-design and
\[
|X|=\binom{d+1}{2}.
\]
\end{definition}

\begin{corollary}\label{cor:GtoS}
Let $X \subset G_{k,d}$ be a tight Grassmann $4$-design.
Then $X$ is equi-chordal.
\end{corollary}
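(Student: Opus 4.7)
The plan is to reduce the equi-chordal property to a spherical one via the CHS embedding $\Phi$, and then use the classical fact that a tight spherical $2$-design is a regular simplex. Since $|X| = \binom{d+1}{2} = D_0 + 1$, Lemma~\ref{lem:GtoS} gives that $Y := \Phi(X) \subset S^{D_0 - 1}$ is a spherical $2$-design of exactly the absolute minimum cardinality $D_0 + 1$, which is the regime in which all tight moment bounds become equalities.

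The key step is to show that all off-diagonal inner products $\tau_{ij} := \langle \Phi(V_i), \Phi(V_j) \rangle_F$ ($i\ne j$) coincide. Set $N := |X| = D_0 + 1$. The spherical $1$-design property gives $\sum_i \Phi(V_i) = 0$; squaring the norm in $H_0$ and using $\|\Phi(V_i)\|_F = 1$ yields
\[
\sum_{i \ne j} \tau_{ij} = -N.
\]
The spherical $2$-design property, via Proposition~\ref{prop:sph-design-equiv}(iii) with $m=2$ summed over $y = \Phi(V_j)$, gives $\sum_{i,j} \tau_{ij}^2 = N^2/D_0$, whence
\[
\sum_{i \ne j} \tau_{ij}^2 = \frac{N(N-D_0)}{D_0} = \frac{N}{D_0} = \frac{N}{N-1}.
\]
Cauchy--Schwarz forces $\sum_{i\ne j}\tau_{ij}^2 \ge N/(N-1)$, and this bound is attained, so all off-diagonal $\tau_{ij}$ coincide. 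Comparing with $\sum_{i\ne j}\tau_{ij} = -N$ then pins the common value to $-1/(N-1) = -1/D_0$.

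Since $\Phi$ is an isometric embedding for the chordal distance, equal off-diagonal Frobenius inner products on $\Phi(X)$ translate directly into equal pairwise chordal distances on $X$, so $X$ is equi-chordal. An equivalent packaging is to note that the simplex bound of Corollary~\ref{cor:simplex} is attained at $N = D_0 + 1$, and the equality clause there immediately yields the regular simplex configuration. I do not foresee any real obstacle; the main point to be careful about is keeping the ambient dimension straight — the $2$-design moment identity must be applied on $S^{D_0-1}$, so $D_0$ (not $d$) appears in the denominator — and exploiting the exact equality $|X| = D_0+1$ that makes the Cauchy--Schwarz step sharp.
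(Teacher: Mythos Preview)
Your proof is correct and follows essentially the same route as the paper: apply Lemma~\ref{lem:GtoS} to get a spherical $2$-design $Y=\Phi(X)\subset S^{D_0-1}$ of size $D_0+1$, argue that $Y$ must be a regular simplex, and pull back via the chordal isometry of $\Phi$. The only difference is in the middle step: the paper invokes the equality clause of Corollary~\ref{cor:simplex} directly, whereas you give the self-contained moment/Cauchy--Schwarz derivation of the regular-simplex property (and then note the simplex-bound packaging as an alternative); this is the same fact proved two equivalent ways.
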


\begin{proof}
If $X$ is a tight Grassmann $4$-design, then $|X|=\binom{d+1}{2}=D_0+1$.
By Lemma~\ref{lem:GtoS}, $Y=\Phi(X)$ is a spherical $2$-design on $S^{D_0-1}$ with $|Y|=D_0+1$.
Hence $\sum_{y\in Y}y=0$, so $Y$ achieves the equality in Corollary~\ref{cor:simplex}.
Therefore $Y$ forms a regular simplex in $S^{D_0-1}$, and in particular is equidistant.
Since $\Phi$ is an isometric embedding with respect to the chordal distance, $X$ is equi-chordal.
\end{proof}

The following theorem provides basic cardinality constraints for $\ECTFF_2$ in $G_{2,d}$.

\begin{theorem}\label{thm:ECTFF_2}
Let $d\ge 4$, and let $X=\{V_i\}_{i=1}^{N}\subset G_{2,d}$ be an $\ECTFF_2$.
Then
\[
\frac{d^2}{4} \le N \le \binom{d+1}{2}.
\]
Moreover, equality in the lower bound holds if and only if $X$ is an $\EITFF_2$ on $G_{2,d}$,
and equality in the upper bound holds if and only if $X$ is a tight $4$-design on $G_{2,d}$.
\end{theorem}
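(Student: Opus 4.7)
The plan is to leverage the isometric embedding $\Phi:G_{2,d}\to S^{D_0-1}\subset H_0$ of \cite{CHS1996} with $D_0=\binom{d+1}{2}-1$, together with the explicit formulas of Remark~\ref{rem:grass-zonal-low}. First, the $\TFF_2$ condition includes $\sum_{V,W}P_{(2)}(V,W)=0$, which is equivalent to the tight-frame identity $\sum_V P_V=(2N/d)I_d$, i.e., $\sum_V R_V=0$. Combined with equi-chordality (constant Frobenius inner product $\rho$ among distinct pairs under $\Phi$), this forces $\rho=-1/(N-1)$, so $\Phi(X)$ is a regular $(N-1)$-simplex on $S^{D_0-1}$. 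For the \emph{upper bound}, the Gram matrix of $\{R_V\}$ has eigenvalues $1+(N-1)\rho=0$ (multiplicity $1$) and $1-\rho=N/(N-1)$ (multiplicity $N-1$), so its rank equals $N-1\le D_0$, giving $N\le\binom{d+1}{2}$ (essentially Corollary~\ref{cor:simplex}).

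For the \emph{lower bound}, I would combine $\sum_{V,W}P_{(4)}(V,W)=0$ with the value $s:=y_1(V,W)+y_2(V,W)=2(2N-d)/(d(N-1))$ for $V\ne W$ (extracted from $\sum_{V,W}P_{(2)}(V,W)=0$ together with equi-chordality) to solve explicitly for $A:=\sum_{V\ne W}y_1(V,W)y_2(V,W)$. Applying $y_1y_2\le(y_1+y_2)^2/4$ to each off-diagonal pair yields $A\le N(N-1)s^2/4$; substituting the explicit values of $s$ and $A$ and clearing denominators reduces this to a polynomial inequality in $N,d$ that factors as (a positive quantity)$\cdot(d^2-4N)\le 0$, hence $N\ge d^2/4$. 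The AM-GM equality case $y_1(V,W)=y_2(V,W)$ for every pair $V\ne W$ is exactly the equi-isoclinic condition, so the lower-bound equality case is precisely $X$ being an $\EITFF_2$.

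For the \emph{upper-bound equality}, one direction is immediate: a tight Grassmann $4$-design has cardinality $\binom{d+1}{2}$, is a $\TFF_2$ since $\Pol^{(1)}_4\subset\Pol_4$, and is equi-chordal by Corollary~\ref{cor:GtoS}. For the converse, assume $X$ is an $\ECTFF_2$ with $N=\binom{d+1}{2}=D_0+1$. Then the regular simplex $\Phi(X)$ spans $H_0$, and the standard full-dimensional regular-simplex identity gives $\sum_V R_V\otimes R_V=(N/D_0)I_{H_0}$; equivalently, $\sum_V P_{(2)}(V,V_0)^2=N/D_0$ is constant in $V_0$. Expanding $P_{(2)}(V,V_0)^2$ in zonal polynomials as in the proof of Lemma~\ref{lem:GtoS}, i.e., $P_{(2)}^2=1/D_0+c_{(2)}P_{(2)}+c_{(4)}P_{(4)}+c_{(2,2)}P_{(2,2)}$ (all evaluated at $(V,V_0)$), summing over $V$, and using that these zonal functions span pairwise orthogonal $O(d)$-isotypic components, the constancy in $V_0$ forces $c_\mu\sum_V P_{2\mu}(V,V_0)=0$ separately for each $\mu\in\{(1),(2),(1,1)\}$. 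Since the coefficients $c_{(4)}=\tfrac{2d^3}{3(d+2)(d+4)(d-2)}$ and $c_{(2,2)}=\tfrac{d^2(d-3)}{3(d-2)^2(d-1)}$ are strictly positive for all $d\ge 4$, this yields $\sum_V P_{(4)}(V,V_0)=\sum_V P_{(2,2)}(V,V_0)=0$ for every $V_0$, and combined with the $\TFF_2$ identity $\sum P_{(2)}=0$, Proposition~\ref{prop:grass-design-equiv} gives the Grassmann $4$-design conclusion.

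The main obstacle I foresee is in the upper-bound-equality direction: verifying that the zonal expansion coefficients $c_{(4)}$ and $c_{(2,2)}$ are indeed nonzero, so that the spherical $2$-design identity propagates into cubature on $\mathsf{H}_{(4)}$ and $\mathsf{H}_{(2,2)}$. The general-$k$ expansion of $P_{(2)}^2$ produced in the proof of Lemma~\ref{lem:GtoS} provides these coefficients explicitly; specializing to $k=2$ shows both are strictly positive for all $d\ge 4$. As a purely computational alternative, one can instead compute $\sum_{V,W}P_{(2,2)}(V,W)$ directly as an explicit function of $N,d,s,A$ from Remark~\ref{rem:grass-zonal-low}, substitute the $\ECTFF_2$ values of $s$ and $A$ derived above, and verify algebraically that the resulting expression vanishes if and only if $N=\binom{d+1}{2}$.
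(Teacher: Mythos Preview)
Your proposal is correct. The lower bound and its equality case proceed exactly as in the paper: derive $e_{1,0}=s=\frac{2(2N-d)}{d(N-1)}$ from $\sum P_{(2)}=0$, solve $\sum P_{(4)}=0$ for the average of $y_1y_2$, apply AM--GM, and factor out $(d^2-4N)$. The upper bound $N\le D_0+1$ is likewise obtained via the simplex bound in both treatments.

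The one genuine difference is in the upper-bound equality direction. The paper takes the direct computational route you mention only as an alternative: it evaluates $\sum_{V,W}P_{(2,2)}(V,W)=N+(N^2-N)P_{(2,2)}(e_{1,0},\overline{e_2(N)})$ using the already-derived values of $e_{1,0}$ and $\overline{e_2(N)}$, simplifies to $\frac{3N^2(d-2)^2(d(d+1)-2N)}{d^2(N-1)(d-3)(d+2)}$, and reads off the vanishing at $N=\binom{d+1}{2}$. Your primary argument is more structural: you use that a full $(D_0+1)$-point regular simplex on $S^{D_0-1}$ is a spherical $2$-design, so $\sum_V\langle R_V,\cdot\rangle^2$ is constant on $H_0$; then the zonal expansion of $P_{(2)}^2$ from Lemma~\ref{lem:GtoS} together with orthogonality of the isotypic components forces $\sum_V P_{(4)}(V,\cdot)=\sum_V P_{(2,2)}(V,\cdot)=0$, since $c_{(4)},c_{(2,2)}>0$ for $d\ge4$. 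Your route avoids the explicit evaluation of $\overline{e_2(N)}$ for this step and makes transparent \emph{why} the tight-$4$-design property emerges (the full simplex upgrades the spherical $1$-design $\Phi(X)$ to a spherical $2$-design), at the cost of relying on the zonal-expansion coefficients; the paper's route is purely algebraic and self-contained once $e_{1,0}$ and $\overline{e_2(N)}$ are in hand.
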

\begin{proof}
For $V,W\in G_{2,d}$, set
\begin{equation*}
e_1(V,W):=y_1(V,W)+y_2(V,W),\qquad
e_2(V,W):=y_1(V,W)y_2(V,W).
\end{equation*}

Since $X$ is equi-chordal, $e_1(V_i,V_j)$ is constant for $i\neq j$. Set
\begin{equation*}
e_{1,0}:=e_1(V_i,V_j)\qquad (i\neq j).
\end{equation*}
Also define
\begin{equation*}
\overline{e_2(N)}:=\frac{1}{N^2-N}\sum_{i\neq j} e_2(V_i,V_j).
\end{equation*}

\textbf{(i) Lower bound.}
Since $X$ is an equal-weight tight $2$-fusion frame, Theorem~\ref{thm:BE-cubature}{\rm(iii)} applied with $t=2$
(and weights $\omega_i=1/N$) yields the moment identities for the one-row zonal polynomials $P_{(2)}$ and $P_{(4)}$:

\begin{equation}\label{eq:tff2-moments}
\sum_{i,j=1}^N P_{(2)}(V_i,V_j)=0,\qquad
\sum_{i,j=1}^N P_{(4)}(V_i,V_j)=0.
\end{equation}
By \eqref{eq:P_2} (with $k=2$), we obtain
\begin{equation*}
P_{(2)}(V,W)=\frac{4-d\,e_1(V,W)}{2(2-d)}.
\end{equation*}
Since $e_1(V_i,V_j)=e_{1,0}$ for all $i\neq j$, the first identity in \eqref{eq:tff2-moments} gives
\begin{equation*}
0=\sum_{i,j=1}^N P_{(2)}(V_i,V_j)
= N+\sum_{i\neq j} P_{(2)}(V_i,V_j)
= N+(N^2-N)\,\frac{4-d\,e_{1,0}}{2(2-d)}.
\end{equation*}
Solving for $e_{1,0}$ yields
\begin{equation}\label{eq:e10_jp_en}
e_{1,0}=\frac{2(2N-d)}{d(N-1)}.
\end{equation}

Next, by \eqref{eq:P_4} (with $k=2$), we rewrite $P_{(4)}$ in the variables $(e_1,e_2)$:
\begin{equation*}
P_{(4)}'(e_1,e_2):=
1-\frac{d+2}{2} e_1+\frac{3(d+2)(d+4)}{64} e_1^2 -\frac{(d+2)(d+4)}{16} e_2,
\qquad
P_{(4)}(e_1,e_2):=\frac{8}{d(d-2)}\,P_{(4)}'(e_1,e_2).
\end{equation*}
Since $P_{(4)}$ has degree one in $e_2$, we have
\begin{equation*}
\sum_{i\neq j}P_{(4)}\bigl(e_{1,0},e_2(V_i,V_j)\bigr)
=(N^2-N)\,P_{(4)}\bigl(e_{1,0},\overline{e_2(N)}\bigr).
\end{equation*}
Therefore, the second identity in \eqref{eq:tff2-moments} becomes
\begin{equation*}
0=\sum_{i,j=1}^N P_{(4)}(V_i,V_j)
=N+(N^2-N)\,P_{(4)}\bigl(e_{1,0},\overline{e_2(N)}\bigr),
\end{equation*}
and solving for $\overline{e_2(N)}$ yields
\begin{equation}\label{eq:e2bar_jp_en}
\overline{e_2(N)}
=
\frac{d^2(d+2) +2d(d^2-4d-4)N-4(d-6)N^2}{d^2(d+2)(N-1)^2}.
\end{equation}

For each $i\neq j$, we have $y_1(V_i,V_j),y_2(V_i,V_j)\ge0$ and
$y_1(V_i,V_j)+y_2(V_i,V_j)=e_{1,0}$; hence by the AM--GM inequality,
\begin{equation}\label{eq:AMGM_pointwise}
0\le e_2(V_i,V_j)=y_1(V_i,V_j)y_2(V_i,V_j)\le \left(\frac{e_{1,0}}{2}\right)^2.
\end{equation}
Averaging \eqref{eq:AMGM_pointwise} over $i\neq j$ gives
\begin{equation*}
0\le \overline{e_2(N)}\le \frac{e_{1,0}^2}{4}.
\end{equation*}
Substituting \eqref{eq:e10_jp_en} and \eqref{eq:e2bar_jp_en} and simplifying, we obtain
\begin{equation*}
\frac{e_{1,0}^2}{4}-\overline{e_2(N)}
=
\frac{-2N(d-2)(d^2-4N)}{d^2(d+2)(N-1)^2}.
\end{equation*}
The denominator is positive and the left-hand side is $\ge 0$, hence $d^2-4N\le 0$, i.e.
\[
N\ge \frac{d^2}{4}.
\]
If $N=\frac{d^2}{4}$, then $\overline{e_2(N)}=\frac{e_{1,0}^2}{4}$.
Since each term satisfies $e_2(V_i,V_j)\le \frac{e_{1,0}^2}{4}$, attaining the average maximum forces
\[
e_2(V_i,V_j)=\frac{e_{1,0}^2}{4}\qquad(i\neq j).
\]
Thus $y_1(V_i,V_j)=y_2(V_i,V_j)=\frac{e_{1,0}}{2}$ for all $i\neq j$, so $X$ is equi-isoclinic, i.e.\ an $\EITFF_2$.
Conversely, if $X$ is an $\EITFF_2$, then the above computation implies $N=\frac{d^2}{4}$.

\textbf{(ii) Upper bound.}
The upper bound $N\le D_0+1=\binom{d+1}{2}$ follows from Corollary~\ref{cor:simplex}. Assume further that $N=\binom{d+1}{2}$.
In this case, to show that $X$ is a Grassmann $4$-design it suffices to verify
\[
\sum_{i,j=1}^N P_{(2,2)}(V_i,V_j)=0,
\]
since the moment identities~\eqref{eq:tff2-moments} already hold.
Similarly, by \eqref{eq:P_22} (with $k=2$) we rewrite $P_{(2,2)}$ in the variables $(e_1,e_2)$ and normalize it by
$P_{(2,2)}(2,1)=1$:
\[
P_{(2,2)}'(e_1,e_2):= 1-\frac{d-1}{2} e_1+\frac{(d-2)(d-1)}{2} e_2,
\qquad
P_{(2,2)}(e_1,e_2):=\frac{2}{(d-2)(d-3)}\,P_{(2,2)}'(e_1,e_2).
\]
Since $P_{(2,2)}$ has degree one in $e_2$, we obtain
\[
\sum_{i,j=1}^N P_{(2,2)}(V_i,V_j)
= N+(N^2-N)\,P_{(2,2)}\bigl(e_{1,0},\overline{e_2(N)}\bigr).
\]
Substituting \eqref{eq:e10_jp_en} and \eqref{eq:e2bar_jp_en} and simplifying, we obtain
\[
\sum_{i,j=1}^N P_{(2,2)}(V_i,V_j)
=
\frac{3N^2(d-2)^2\bigl(d(d+1)-2N\bigr)}{d^2(N-1)(d-3)(d+2)}.
\]
Hence, when $N=\frac{d(d+1)}{2}$ this sum is $0$, and $X$ is a Grassmann $4$-design. Moreover, $|X|=D_0+1=\binom{d+1}{2}$,
and since $X$ is a Grassmann $4$-design, it follows from Definition~\ref{def:tight4} that $X$ is a tight Grassmann $4$-design.

Conversely, if $X$ is a tight $4$-design, then $|X|=D_0+1=\binom{d+1}{2}$ and
Corollary~\ref{cor:GtoS} implies that $X$ is equi-chordal.
Also, a Grassmann $4$-design in particular satisfies \eqref{eq:tff2-moments}, so $X$ is a tight $2$-fusion frame.
Therefore $X$ is an $\ECTFF_2$.
\end{proof}
As a consequence, we obtain the non-existence of $\EITFF_2$ on $G_{2,d}$ in odd dimensions.

\begin{corollary}\label{cor:no-eitff-odd}
Let $d$ be odd and let $t>1$.
Then there exists no $\EITFF_t$ on $G_{2,d}$.
\end{corollary}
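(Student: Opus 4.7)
The plan is to reduce the statement to the $t=2$ case already handled by Theorem~\ref{thm:ECTFF_2} and then extract a parity contradiction. I would argue in three short steps: $\EITFF_t \Rightarrow \EITFF_2 \Rightarrow \ECTFF_2 \Rightarrow |X| = d^2/4$, which is impossible in odd dimension $d$.

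For the first implication, fix an $\EITFF_t$ set $X \subset G_{2,d}$ with $t > 1$. The equi-isoclinic property is a geometric condition on principal angles independent of $t$, so it transfers unchanged. The remaining content is that a tight $t$-fusion frame is automatically a tight $2$-fusion frame: by Theorem~\ref{thm:BE-cubature}(iii), being a tight $t$-fusion frame amounts to the vanishing of $\sum_{V,W \in X} P_{(2\ell)}(V,W)$ for every $1 \le \ell \le t$, so in particular the $\ell = 1$ and $\ell = 2$ identities hold. For the second implication, I would observe that if every pair of distinct subspaces has all principal angles equal to a common $\theta \in [0, \pi/2]$, then the chordal distance satisfies $d_C(V_i,V_j)^2 = 2 \sin^2 \theta$ (using $k=2$), which is constant, so $X$ is equi-chordal.

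Finally, I would invoke Theorem~\ref{thm:ECTFF_2}: the equality case of the lower bound $|X| \ge d^2/4$ holds precisely when $X$ is $\EITFF_2$, so the reduction forces $|X| = d^2/4$. Since $d$ is odd, $d^2$ is odd and $d^2/4$ is not an integer, contradicting $|X| \in \N$. I do not anticipate any real obstacle — the heavy lifting has already been done in Theorem~\ref{thm:ECTFF_2}, and the only point worth stating carefully is the routine first reduction via the nested tower of moment identities from Theorem~\ref{thm:BE-cubature}(iii).
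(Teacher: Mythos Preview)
Your proof is correct and matches the paper's intended argument. The paper presents Corollary~\ref{cor:no-eitff-odd} without proof, as an immediate consequence of Theorem~\ref{thm:ECTFF_2}; your chain $\EITFF_t \Rightarrow \EITFF_2 \Rightarrow \ECTFF_2 \Rightarrow |X|=d^2/4 \notin \mathbb{N}$ is exactly the natural derivation, and your justification of the first step via the nested moment identities of Theorem~\ref{thm:BE-cubature}(iii) is the right way to make it explicit.
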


\begin{remark}\label{rem:zauner}
Let $d=2n$ and identify $\R^{2n}$ with $\C^n$.
Each complex line $\C z\subset\C^n$ determines a real plane
\[
V(z):=\Span_\R\{z,iz\}\in G_{2,2n}.
\]
Here a SIC-POVM in $\C^n$ means a set of unit vectors $z_1,\dots,z_{n^2}\in\C^n$ such that
$|z_i^{*}z_j|^2=1/(n+1)$ for all $i\ne j$.
Under this correspondence, a SIC-POVM yields an $\EITFF_2$ on $G_{2,2n}$ of size $n^2$, and conversely.
See~\cite{Misawa2026} for details.
\end{remark}

\subsection*{Acknowledgements}

The author would like to express his sincere gratitude to Professor Akihiro Munemasa for his guidance and encouragement.
He also thanks Professor Takayuki Okuda and Ayodeji Lindblad for their valuable comments and helpful discussions, particularly regarding constructions of designs through projective and Hopf maps.
{

}


\begin{thebibliography}{99}

\bibitem{ACSW2010}
C.~An, X.~Chen, I.~H.~Sloan, and R.~S.~Womersley,
\newblock Well conditioned spherical designs for integration and interpolation on the two-sphere,
\newblock \emph{SIAM J.\ Numer.\ Anal.} \textbf{48} (2010), no.~6, 2135--2157.

\bibitem{BBC2004}
C.~Bachoc, E.~Bannai, and R.~Coulangeon,
\newblock Codes and designs in Grassmannian spaces,
\newblock \emph{Discrete Math.} \textbf{277} (2004), no.~1--3, 15--28.
\newblock doi:10.1016/S0012-365X(03)00151-1.

\bibitem{BE2013}
C.~Bachoc and M.~Ehler,
\newblock Tight $p$-fusion frames,
\newblock \emph{Appl.\ Comput.\ Harmon.\ Anal.} \textbf{35} (2013), no.~1, 1--15.

\bibitem{BCN2002}
C.~Bachoc, R.~Coulangeon, and G.~Nebe,
\newblock Designs in Grassmannian spaces and lattices,
\newblock \emph{J.\ Algebraic Combin.} \textbf{16} (2002), no.~1, 5--19.

\bibitem{Bajnok1991-1}
B.~Bajnok,
\newblock Construction of spherical $4$- and $5$-designs,
\newblock \emph{Graphs Combin.} \textbf{7} (1991), no.~3, 219--233.


\bibitem{BB2009}
E.~Bannai and E.~Bannai,
\newblock A survey on spherical designs and algebraic combinatorics on spheres,
\newblock \emph{European J.\ Combin.} \textbf{30} (2009), no.~6, 1392--1425.
\newblock doi:10.1016/j.ejc.2008.11.007.

\bibitem{BF2003}
J.~J.~Benedetto and M.~Fickus,
\newblock Finite normalized tight frames,
\newblock \emph{Adv.\ Comput.\ Math.} \textbf{18} (2003), 357--385.

\bibitem{BRV2013}
A.~Bondarenko, D.~Radchenko, and M.~Viazovska,
\newblock Optimal asymptotic bounds for spherical designs,
\newblock \emph{Ann.\ of Math.} \textbf{178} (2013), no.~2, 443--452.

\bibitem{CCEK2007}
H.~Cohn, J.~H.~Conway, N.~D.~Elkies, and A.~Kumar,
\newblock The $D_4$ root system is not universally optimal,
\newblock \emph{Experimental Math.} \textbf{16} (2007), no.~3, 313--320.

\bibitem{CHS1996}
J.~H.~Conway, R.~H.~Hardin, and N.~J.~A.~Sloane,
\newblock Packing lines, planes, etc.: Packings in Grassmannian spaces,
\newblock \emph{Experimental Math.} \textbf{5} (1996), no.~2, 139--159.
\newblock doi:10.1080/10586458.1996.10504585.

\bibitem{CKL2008}
P.~G.~Casazza, G.~Kutyniok, and S.~Li,
\newblock Fusion frames and distributed processing,
\newblock \emph{Appl.\ Comput.\ Harmon.\ Anal.} \textbf{25} (2008), no.~1, 114--132.

\bibitem{DGS1977}
P.~Delsarte, J.~M.~Goethals, and J.~J.~Seidel,
\newblock Spherical codes and designs,
\newblock \emph{Geometriae Dedicata} \textbf{6} (1977), no.~3, 363--388.

\bibitem{JC1974}
A.~T.~James and A.~G.~Constantine,
\newblock Generalized Jacobi polynomials as spherical functions of the Grassmann manifold,
\newblock \emph{Proc. London Math. Soc.} (3) \textbf{29} (1974), 174--192.
\newblock doi:10.1112/plms/s3-29.1.174.

\bibitem{Koenig1999}
H.~K\"onig,
\newblock Cubature formulas on spheres,
\newblock in \emph{Advances in Multivariate Approximation} (Witten-Bommerholz, 1998),
\newblock Math.\ Res., vol.~107, Wiley-VCH, Berlin, 1999, pp.~201--211.

\bibitem{Kuperberg2006}
G.~Kuperberg,
\newblock Numerical cubature from Archimedes' hat-box theorem,
\newblock \emph{SIAM J.\ Numer.\ Anal.} \textbf{44} (2006), no.~3, 908--935.

\bibitem{Lindblad2023}
A.~Lindblad,
\newblock Designs related through projective and Hopf maps,
\newblock \emph{Discrete \& Computational Geometry} (2025).

\bibitem{Misawa2026}
R.~Misawa,
\newblock Explicit construction of spherical $5$- and $7$-designs,
\newblock in preparation (2026).


\bibitem{NPW2006}
F.~J.~Narcowich, P.~Petrushev, and J.~D.~Ward,
\newblock Localized tight frames on spheres,
\newblock \emph{SIAM J.\ Math.\ Anal.} \textbf{38} (2006), no.~2, 574--594.

\bibitem{Okuda2015}
T.~Okuda,
\newblock Relation between spherical designs through a Hopf map,
\newblock \emph{arXiv preprint} arXiv:1506.08414 (2015).

\bibitem{Renes2004}
J.~M.~Renes, R.~Blume-Kohout, A.~J.~Scott, and C.~M.~Caves,
\newblock Symmetric informationally complete quantum measurements,
\newblock \emph{J.\ Math.\ Phys.} \textbf{45} (2004), no.~6, 2171--2180.

\bibitem{SHC2003}
N.~J.~A.~Sloane, R.~H.~Hardin, and P.~Cara,
\newblock Spherical designs in four dimensions,
\newblock in \emph{Proc.\ 2003 IEEE Information Theory Workshop (ITW 2003)}, 2003.
\newblock doi:10.1109/ITW.2003.1216742.

\bibitem{TTHS2025}
K.~Tanino, T.~Tamaru, M.~Hirao, and M.~Sawa,
\newblock More on the corner-vector construction for spherical designs,
\newblock \emph{Algebraic Combin.} \textbf{8} (2025), no.~5, 1387--1414.

\bibitem{Womersley2017}
R.~S.~Womersley,
\newblock Efficient spherical designs with good geometric properties,
\newblock preprint (2017), arXiv:1709.01624.

\bibitem{Xiang2022}
Z.~Xiang,
\newblock Explicit spherical designs,
\newblock \emph{Algebraic Combin.} \textbf{5} (2022), no.~2, 347--369.

\end{thebibliography}
\end{document}